
\documentclass[3p,10pt]{elsarticle}




\usepackage{amssymb,amsthm,siunitx,tikz}
\usepackage{latexsym,amsmath,subfigure,color,pgfplots}

\usepackage{lineno}

\journal{}

\newcommand{\eps}{\varepsilon}
\newcommand{\kb}{k_{\mathrm{b}}}
\newcommand{\epsb}{\varepsilon_{\mathrm{b}}}
\newcommand{\mub}{\mu_{\mathrm{b}}}
\newcommand{\sigmab}{\sigma_{\mathrm{b}}}
\newcommand{\set}[1]{\left\{#1\right\}}

\newcommand{\p}{\partial}
\newcommand{\rd}{\mathrm{d}}
\newcommand{\ma}{\mathbf{a}}
\newcommand{\mb}{\mathbf{b}}
\newcommand{\mr}{\mathbf{r}}

\newcommand{\mA}{\mathcal{A}}
\newcommand{\mB}{\mathcal{B}}
\newcommand{\mE}{\mathbf{E}}
\newcommand{\mF}{\mathbf{F}}
\newcommand{\mG}{\mathbf{G}}
\newcommand{\mH}{\mathbf{H}}
\newcommand{\vt}{\boldsymbol{\theta}}
\newcommand{\vv}{\boldsymbol{\vartheta}}

\DeclareMathOperator*{\inc}{inc}
\DeclareMathOperator*{\osm}{OSM}
\DeclareMathOperator*{\osmm}{OSMM}
\DeclareMathOperator*{\mosm}{MOSM}
\DeclareMathOperator*{\scat}{scat}

\DeclareMathOperator*{\area}{area}

\theoremstyle{plain}
\newtheorem{theorem}{Theorem}[section]
\newtheorem{lemma}{Lemma}[section]

\theoremstyle{remark}
\newtheorem{remark}{Remark}[section]

\begin{document}

\begin{frontmatter}



\title{Inversion of limited-aperture Fresnel experimental data using orthogonality sampling method with single and multiple sources}

\author{Won-Kwang Park}
\ead{parkwk@kookmin.ac.kr}
\address{Department of Information Security, Cryptography, and Mathematics, Kookmin University, Seoul, 02707, Korea}

\begin{abstract}
In this study, we consider the application of orthogonality sampling method (OSM) with single and multiple sources for a fast identification of small objects in limited-aperture inverse scattering problem. We first apply the OSM with single source and show that the indicator function with single source can be expressed by the Bessel function of order zero of the first kind, infinite series of Bessel function of nonzero integer order of the first kind, range of signal receiver, and the location of emitter. Based on this result, we explain that the objects can be identified through the OSM with single source but the identification is significantly influenced by the location of source and applied frequency. For a successful improvement, we then consider the OSM with multiple sources. Based on the identified structure of the OSM with single source, we design an indicator function of the OSM with multiple sources and show that it can be expressed by the square of the Bessel function of order zero of the first kind an infinite series of the square of Bessel function of nonzero integer order of the first kind. Based on the theoretical results, we explain that the objects can be identified uniquely through the designed OSM. Several numerical experiments with experimental data provided by the Institute Fresnel demonstrate the pros and cons of the OSM with single source and how the designed OSM with multiple sources behave.
\end{abstract}

\begin{keyword}
Orthogonality sampling method \sep limited-aperture inverse scattering problem \sep Bessel functions of the first kind \sep experimental data


\end{keyword}

\end{frontmatter}






\section{Introduction}\label{sec:1}
Development of an effective and stable technique for retrieving unknown object from measured scattered field or scattering parameter data is an old but still interesting research subject to nowadays scientists and engineers because this subject is highly related to modern human life such as biomedical imaging \cite{ABM,ML2} including breast cancer detection \cite{CBC,SKLJS}, through-wall imaging for defect recognition \cite{B6,SSA}, damage detection of concrete structure \cite{FFK,WZZ}, land mine detection \cite{DEKPS,GCGGC}, synthetic-aperture radar imaging \cite{C2,GS}, ground penetrating radar \cite{LFNA,LSL}. We further refer to related studies \cite{A1,A2,BCS,C7,CK,N3,P-Book,Z2} for various applications. Let us notice that most of algorithms are based on Newton-type iteration schemes so that one must generate good initial guess which is close enough to the unknown objects.

Instead of iterative based algorithm, alternative non-iterative techniques have been investigated to retrieve unknown object. Throughout several researches about the bifocusing method \cite{JBRBTFC,KP4,SP2}, direct sampling method \cite{AHP2,IJZ1,KLAHP}, factorization method \cite{GYJH,LLP,P-FAC1}, MUltiple SIgnal Classificiation \cite{AILP,P-MUSIC6,ZC}, migration techniques \cite{AGKPS,P-SUB11,P-SUB18}, and topological derivative \cite{FPRV,LR1,P-TD5}, it has been turned out that although complete information of objects such as material properties cannot be retrieved, they are very effective techniques for identifying the existence, location, and outline shape of objects.

Orthogonality sampling method (OSM) is classified as a non-iterative imaging technique in both inverse scattering problem and microwave imaging. From the beginning study of the OSM by Potthast \cite{P1}, it has been applied various inverse scattering problems. Owing to the several studies \cite{ACP,BIPAC,G1,HN,KCP1,LNST}, it has been confirmed that the OSM is very fast, stable, and effective imaging technique in inverse scattering problem. Unfortunately, most of studies performed the numerical simulation to show the effectiveness of the OSM with synthetic data. In some researches \cite{BIPAC,LNST}, the OSM was applied in real-world inverse scattering problem with experimental datasets produced by the Institute Fresnel, France \cite{BS}. Although the OSM has demonstrated its applicability and robustness for retrieving a set of small objects from experimental data, an appropriate mathematical theory to explain some phenomena (for example, \textcircled{1} why the imaging performance is significantly on the location of source, \textcircled{2} why the application of low and high frequencies is not appropriate for retrieving multiple objects), and to design alternative technique for improving the imaging performance has not been established yet.

In this paper, we consider the application of the OSM for identifying a set of objects from experimental Fresnel data. First, we introduce the traditional indicator function for the OSM and reveal its mathematical structure by establishing a relationship with the Bessel function of order zero of the first kind, infinite series of Bessel function of nonzero integer order of the first kind, range of signal receiver, and the location of emitter. Based on the structure, we explain some intrinsic properties of the OSM and  provide theoretical answers to the unexplained phenomena mentioned above. We then exhibit simulation results with experimental data to demonstrate the theoretical result and fundamental limitation of object detection.

Next, we consider the OSM with multiple sources to improve the imaging performance for a proper detection of objects. To this end, we adopt the traditional indicator function with multiple sources introduced in \cite{P1} and propose another indicator function. In order to show the applicability, effectiveness, improvement of the proposed indicator function, and unique determination, we show that it can be expressed by the square of the Bessel function of order zero of the first kind an infinite series of the square of Bessel function of nonzero integer order of the first kind. We then exhibit simulation results to support established structure, discovered certain properties of the designed indicator function, and compare the imaging/detection performances.

The rest of this paper is organized as follows. In Section \ref{sec:2}, we briefly survey the direct scattering problem in the presence of a set of small objects and introduce the traditional indicator function of the OSM. In Section \ref{sec:3},  mathematical structure of the indicator function with single source is explored by establishing a relationship with an infinite series of the Bessel functions, range of receivers, and the location of emitter. In Section \ref{sec:4}, a set of simulation results with experimental Fresnel dataset is exhibited to confirm the theoretical result and examine the influence of the location of emitter and frequencies at operation. In Section \ref{sec:5}, we introduce the traditional and design a new indicator functions with multiple sources, establish mathematical structure, discover some intrinsic properties of the designed indicator function including unique determination., and exhibit simulation results. Conclusions and perspectives are included in Section \ref{sec:6}.

\section{Direct scattering problem and orthogonality sampling method}\label{sec:2}
Let $\Omega$ be a two-dimensional homogeneous region, $D_s\subset\Omega$, $s=1,2,\ldots,S$, be a two-dimensional small object, and $D$ be the collection of $D_s$. Throughout this paper, we assume that all $D_s$ are well-separated from each other and $\Omega$ is a subset of interior of an anechoic chamber so that the values of background conductivity, permeability, and permittivity are set to $\sigmab\approx0$, $\mub=4\pi\times \SI{e-7}{\henry/\meter}$, and $\eps_0=\SI{8.854e-12}{\farad/\meter}$, respectively, refer to \cite{P-Book}. Correspondingly, every $D_s$ and $\Omega$ are characterized by the value of dielectric permittivity at given angular frequency $\omega=2\pi f$. Let $\eps_s$ and $\epsb$ as the value of permittivity of $D_s$ and $\Omega$, respectively, and $\kb=\omega\sqrt{\epsb\mub}$ be the background wavenumber. With this, we introduce the following piecewise constant
\[\eps(\mr)=\left\{\begin{array}{ccl}
\eps_s&\text{for}&\mr\in D_s\\
\epsb&\text{for}&\mr\in\Omega\backslash\overline{D}.
\end{array}\right.\]

Let us denote $\ma_m$ and $\mb_n$ as the location of $m$th emitter $\mA_m$ and $n$th receiver $\mB_n$, respectively. Following to \cite{BS}, $\ma_m$ and $\mb_n$ can be written as
\[\ma_m=|\ma_m|(\cos\vartheta_m,\sin\vartheta_m)=|\ma_m|\vv_m\quad\text{with}\quad|\ma_m|\equiv|\ma|=\SI{0.72}{\meter},\quad\vartheta_m=\frac{2(n-1)\pi}{N}\]
and
\[\mb_n=|\mb_n|(\cos\theta_n,\sin\theta_n)=|\mb_n|\vt_n\quad\text{with}\quad|\mb_n|\equiv|\mb|=\SI{0.76}{\meter},\quad\theta_n=\vartheta_m+\frac{\pi}{3}+\frac{4(n-1)\pi}{3(N-1)},\]
respectively. Here, $\vv_m\in\mathbb{S}^1$ and $\vt_n\in\mathbb{S}_m^1$, where $\mathbb{S}^1$ denotes the unit circle centered at the origin, and
\[\mathbb{S}_m^1=\set{(\cos\theta,\sin\theta):\vartheta_m+\frac{\pi}{3}\leq\theta\leq\vartheta_m+\frac{5\pi}{3}}\subset\mathbb{S}^1.\]
For an illustration, we refer to Figure \ref{Configuration_Fresnel}. Then, the incident field at the fixed point source $\ma_m$ can be written as follows: for $\mr\in\Omega$,
\[u_{\inc}(\mr,\ma_m)=-\frac{i}{4}H_0^{(1)}(\kb|\mr-\ma_m|):=G(\mr,\ma_m),\]
where $H_0^{(1)}$ denotes the Hankel function of order zero of the first kind. Correspondingly, the time-harmonic total field $u(\mb_n,\mr)$ measured at $n$th receiver $\mb_n$ satisfies
\[\triangle u(\mb_n,\mr)+\omega^2\mub\eps(\mr)u(\mb_n,\mr)=0\quad\text{for}\quad\mr\in\Omega\]
with transmission condition $u(\mb_n,\mr)\big|_--u(\mb_n,\mr)\big|_+=0$ on $\p D_s$, $s=1,2,\ldots,S$. Here, the time harmonic $e^{-i\omega t}$ is assumed. Let $u_{\scat}(\mb_n,\mr)$ as the scattered-field corresponding to the incident field. Then based on \cite{CK}, $u_{\scat}(\mb_n,\mr)$ can be expressed by the single-layer potential with unknown density function $\varphi$:
\[u_{\scat}(\mb_n,\mr)=u(\mb_n,\ma_m)-u_{\inc}(\mr,\ma_m)=\int_D G(\mb_n,\mr)\varphi(\mr,\ma_m)\rd\mr.\]
Note that the closed form of the density function $\varphi(\mr,\ma_m)$ is unknown, it is not appropriate to use the $u_{\scat}(\mb_n,\mr)$ directly to design the indicator function of the OSM. Due to this reason, we use the following asymptotic expansion formula, which is the key formula to design and analyze the structure of the indicator function.

\begin{lemma}[Asymptotic expansion formula \cite{AK2,CV}] For sufficiently large $\omega$, $u_{\scat}(\mb_n,\mr)$ can be represented as
\begin{align}
\begin{aligned}\label{AsymptoticFormula}
u_{\scat}(\mb_n,\mr)&=\kb^2\int_D\left(\frac{\eps(\mr)-\epsb}{\epsb\mub}\right)G(\mb_n,\mr)u_{\inc}(\mr,\ma_m)\rd\mr\\
&=\kb^2\int_D\left(\frac{\eps(\mr)-\epsb}{\epsb\mub}\right)G(\mb_n,\mr)G(\mr,\ma_m)\rd\mr.
\end{aligned}
\end{align}
\end{lemma}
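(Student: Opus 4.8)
The plan is to obtain the formula in two stages: derive an \emph{exact} volume-integral (Lippmann--Schwinger) representation of the scattered field, and then reduce it to the stated leading-order expression by a perturbation argument.

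First I would subtract the two governing equations. The total field solves $\triangle u + \omega^2\mub\eps(\mr)u = 0$, while the background field solves $\triangle u_{\inc} + \kb^2 u_{\inc} = 0$ with $\kb^2 = \omega^2\epsb\mub$; hence the scattered field $u_{\scat} = u - u_{\inc}$ satisfies
\[\triangle u_{\scat} + \kb^2 u_{\scat} = -\omega^2\mub\bigl(\eps(\mr) - \epsb\bigr)u,\]
together with the Sommerfeld radiation condition. The crucial observation is that, since $\eps\equiv\epsb$ on $\Omega\backslash\overline{D}$, the right-hand side is supported in $D$. Convolving against the outgoing fundamental solution $G$, which satisfies $\triangle G + \kb^2 G = -\delta$, and invoking the Green representation formula then yields the exact identity
\[u_{\scat}(\mb_n,\mr) = \kb^2\int_D\left(\frac{\eps(\mr)-\epsb}{\epsb\mub}\right)G(\mb_n,\mr)u(\mr,\ma_m)\,\rd\mr,\]
where the prefactor is rearranged using $\kb^2 = \omega^2\epsb\mub$ and the integrand still contains the \emph{total} field $u$ rather than the incident field.

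The second and decisive stage is to replace $u(\mr,\ma_m)$ inside the integral by $u_{\inc}(\mr,\ma_m)$, which produces the first line of \eqref{AsymptoticFormula}; the second line is then immediate from the definition $u_{\inc}(\mr,\ma_m) = G(\mr,\ma_m)$. The induced error equals $\kb^2\int_D(\cdots)G(\mb_n,\mr)\bigl(u - u_{\inc}\bigr)(\mr,\ma_m)\,\rd\mr = \kb^2\int_D(\cdots)G(\mb_n,\mr)u_{\scat}(\mr,\ma_m)\,\rd\mr$, so everything hinges on showing that $u_{\scat}$ restricted to $D$ is of strictly higher order. I would establish this by reading the exact identity as a Lippmann--Schwinger equation $(I - \kb^2 K)u_{\scat} = \kb^2 K u_{\inc}$ on $L^2(D)$, where $K$ is the integral operator with kernel $\frac{\eps-\epsb}{\epsb\mub}G$; away from interior resonances $I - \kb^2 K$ is invertible, and the smallness hypothesis (small objects, equivalently the weak-scattering regime underlying the asymptotic expansion of \cite{AK2,CV}) forces $\|\kb^2 K\|$ below unity, so the leading Neumann term recovers the Born approximation while the remainder carries the next power of contrast-times-volume.

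The main obstacle is precisely this remainder estimate: one must quantify in which sense $u_{\scat}$ is negligible on $D$. The cleanest route is the small-volume asymptotic machinery, namely rescaling each $D_s$ to a fixed reference domain and bounding the associated Newtonian-potential contribution, which extracts an extra factor of the object diameter; in a purely high-frequency reading one instead needs resolvent bounds for $I - \kb^2 K$ that remain uniform as $\omega\to\infty$. Either way, once $\|u_{\scat}\|_{L^2(D)}$ is controlled, substituting it back into the error integral and bounding $G(\mb_n,\cdot)$ on the compact set $D$ closes the argument and establishes \eqref{AsymptoticFormula} to leading order.
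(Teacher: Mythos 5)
The paper does not actually prove this lemma --- it is imported verbatim from \cite{AK2,CV} as a known asymptotic expansion --- so there is no in-paper argument to compare yours against. Your two-stage outline (exact Lippmann--Schwinger volume representation, then Born-type replacement of the total field by the incident field with a remainder controlled by a Neumann series or by small-volume rescaling) is precisely the standard route by which the cited references establish such formulas, and the logical skeleton is sound: the subtraction of the two Helmholtz equations, the localization of the source term to $D$, and the identification of the error as $\kb^2\int_D(\cdots)G(\mb_n,\mr)u_{\scat}(\mr,\ma_m)\,\rd\mr$ are all correct.

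Two caveats. First, your prefactor bookkeeping does not land on the printed constant: subtracting the equations gives the source term $-\omega^2\mub(\eps(\mr)-\epsb)u$, and $\omega^2\mub(\eps-\epsb)=\kb^2(\eps-\epsb)/\epsb$, not $\kb^2(\eps-\epsb)/(\epsb\mub)$ as in \eqref{AsymptoticFormula}; the extra $1/\mub$ in the paper appears to be a normalization inherited from the polarization-tensor formulas of \cite{AK2} rather than something your rearrangement produces, so you should either flag the discrepancy or adopt the cited normalization explicitly. Second, you correctly identify the remainder estimate as the crux but leave it at the level of a plan; note that the hypothesis ``for sufficiently large $\omega$'' works \emph{against} a Neumann-series argument, since $\|\kb^2 K\|$ on $L^2(D)$ grows with frequency for fixed $D$. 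The honest small parameter here is the object diameter (times contrast), and only the small-volume rescaling branch of your argument actually closes; the ``uniform resolvent bounds as $\omega\to\infty$'' branch would fail without an accompanying smallness of $D$ relative to the wavelength. With that branch discarded and the small-volume estimate carried out, your proposal recovers the lemma as stated in the weak-scattering regime the paper implicitly assumes.
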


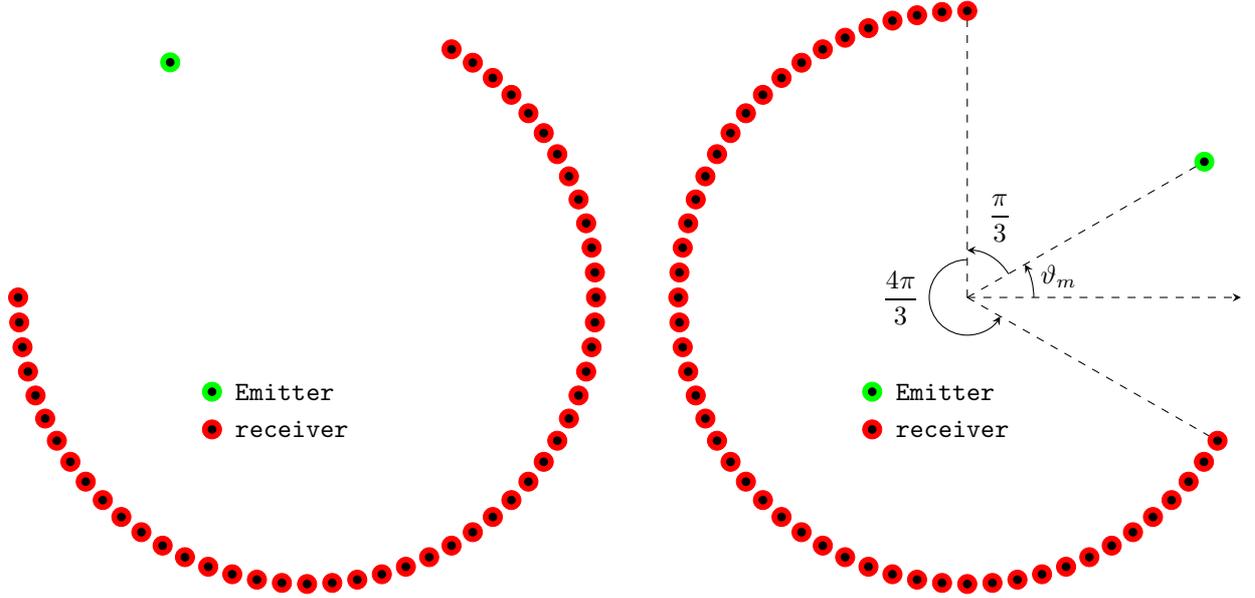
\begin{figure}[h]
\begin{center}
\subfigure{
\begin{tikzpicture}[scale=2.5]

\def\RT{0.72*2};
\def\RR{0.76*2};
\def\Edge{0.15*2};




\draw[green,fill=green] ({\RT*cos(120)},{\RT*sin(120)}) circle (0.05cm);
\draw[black,fill=black] ({\RT*cos(120)},{\RT*sin(120)}) circle (0.02cm);

\foreach \beta in {180,185,...,420}
{\draw[red,fill=red] ({\RR*cos(\beta)},{\RR*sin(\beta)}) circle (0.05cm);
\draw[black,fill=black] ({\RR*cos(\beta)},{\RR*sin(\beta)}) circle (0.02cm);}


\draw[green,fill=green] (-0.5,-0.5) circle (0.05cm);
\draw[black,fill=black] (-0.5,-0.5) circle (0.02cm);
\node[right] at (-0.5,-0.5) {\texttt{~Emitter}};
\draw[red,fill=red] (-0.5,-0.7) circle (0.05cm);
\draw[black,fill=black] (-0.5,-0.7) circle (0.02cm);
\node[right] at (-0.5,-0.7) {\texttt{~receiver}};

\end{tikzpicture}}\qquad
\subfigure{
\begin{tikzpicture}[scale=2.5]

\def\RT{0.72*2};
\def\RR{0.76*2};
\def\Edge{0.15*2};

\draw[black,dashed,-stealth] (0,0) -- ({\RT*cos(0)},{\RT*sin(0)});
\draw[black,dashed,-] (0,0) -- ({\RT*cos(30)},{\RT*sin(30)});
\draw[black,dashed,-] (0,0) -- ({\RR*cos(90)},{\RR*sin(90)});
\draw[black,dashed,-] (0,0) -- ({\RR*cos(330)},{\RR*sin(330)});

\draw[black,solid,-stealth] ({0.25*cos(30)},{0.25*sin(30)}) arc (30:90:0.25);
\node[black] at ({0.5*cos(70)},{0.45*sin(70)}) {$\displaystyle\frac{\pi}{3}$};

\draw[black,solid,-stealth] ({0.2*cos(90)},{0.2*sin(90)}) arc (90:330:0.2);
\node[black] at ({0.35*cos(180)},{0.4*sin(180)}) {$\displaystyle\frac{4\pi}{3}$};

\draw[black,solid,-stealth] ({0.35*cos(0)},{0.35*sin(0)}) arc (0:30:0.35);
\node[black] at ({0.5*cos(30)},{0.22*sin(30)}) {$~~\vartheta_m$};

\draw[green,fill=green] ({\RT*cos(30)},{\RT*sin(30)}) circle (0.05cm);
\draw[black,fill=black] ({\RT*cos(30)},{\RT*sin(30)}) circle (0.02cm);

\foreach \beta in {90,95,...,330}
{\draw[red,fill=red] ({\RR*cos(\beta)},{\RR*sin(\beta)}) circle (0.05cm);
\draw[black,fill=black] ({\RR*cos(\beta)},{\RR*sin(\beta)}) circle (0.02cm);}


\draw[green,fill=green] (-0.5,-0.5) circle (0.05cm);
\draw[black,fill=black] (-0.5,-0.5) circle (0.02cm);
\node[right] at (-0.5,-0.5) {\texttt{~Emitter}};
\draw[red,fill=red] (-0.5,-0.7) circle (0.05cm);
\draw[black,fill=black] (-0.5,-0.7) circle (0.02cm);
\node[right] at (-0.5,-0.7) {\texttt{~receiver}};

\end{tikzpicture}}
\caption{\label{Configuration_Fresnel}Illustration of  measurement configuration corresponding to the location of emitter.}
\end{center}
\end{figure}

\section{Indicator function with single source}\label{sec:3}
In this section, we consider the design an indicator function with single source. Let us denote $\mE(\ma_m)$ as the following arrangement of measurement data
\begin{equation}\label{ArrangementE}
\mE(\ma_m)=\Big(u_{\scat}(\mb_1,\mr),u_{\scat}(\mb_2,\mr),\ldots,u_{\scat}(\mb_N,\mr)\Big).
\end{equation}
Now, applying the mean-value theorem to \eqref{AsymptoticFormula} yields
\[u_{\scat}(\mb_n,\mr_s)=\sum_{s=1}^{S}\kb^2\area(D_s)\left(\frac{\eps_s-\epsb}{\epsb\mub}\right)G(\mb_n,\mr_s)G(\mr_s,\ma_m),\]
thus, we can design the indicator function of the OSM based on the orthogonality relation between the $u_{\scat}(\mb_n,\mr)$ and $G(\mb_n,\cdot)$. To this end, let us introduce a test vector: for each $\mr'\in\Omega$,
\[\mG(\mr')=\Big(G(\mb_1,\mr'),G(\mb_2,\mr'),\ldots,G(\mb_N,\mr')\Big)\]
and corresponding indicator function
\[\mathfrak{F}_{\osm}(\mr',\ma_m)=|\mE(\ma_m)\cdot\overline{\mG(\mr')}|=\left|\sum_{n=1}^{N}u_{\scat}(\mb_n,\mr)\overline{G(\mb_n,\mr')}\right|.\]
Then, map of $\mathfrak{F}_{\osm}(\mr',\ma_m)$ will contain peaks of large magnitude at $\mr'\in D_s$ thereby, it will be possible to recognize the existence or outline shape of $D_s$, $s=1,2,\ldots,S$. In order to discover the feasibility and some properties of the $\mathfrak{F}_{\osm}(\mr',\ma_m)$, we derive the following result.

\begin{theorem}\label{OSM_Single}
  Let $\vv_m=(\cos\vartheta_m,\sin\vartheta_m)$, $\vt_n=(\cos\theta_n,\sin\theta_n)$, $\vt=(\cos\theta,\sin\theta)$, and $\mr'-\mr=|\mr'-\mr|(\cos\phi,\sin\phi)$. Then, for sufficiently large $N$ and $\omega$, $\mathfrak{F}_{\osm}(\mr',\ma_m)$ can be represented as follows:
  \begin{equation}\label{Structure_Single}
    \mathfrak{F}_{\osm}(\mr',\ma_m)=\left|\frac{\kb}{6|\mb|}\int_D\left(\frac{\eps(\mr)-\epsb}{\epsb\mub}\right)G(\mr,\ma_m)\left[J_0(\kb|\mr'-\mr|)+\frac{3}{\pi}\mathcal{E}(\mr',\mr,\ma_m)\right]\rd\mr\right|,
  \end{equation}
where $J_p$ denotes the Bessel function of order $q$ and
\begin{equation}\label{DisturbFactor}
\mathcal{E}(\mr',\mr,\ma_m)=\sum_{p=1}^{\infty}\frac{(-i)^p}{p}J_p(\kb|\mr'-\mr|)\cos\big(p(\vartheta_m-\phi)\big)\sin\left(\frac{2p}{3}\pi\right).
\end{equation}
\end{theorem}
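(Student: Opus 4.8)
The plan is to substitute the asymptotic expansion formula \eqref{AsymptoticFormula} directly into the definition of $\mathfrak{F}_{\osm}(\mr',\ma_m)$, interchange the finite sum over receivers with the integral over $D$, and thereby reduce the whole problem to evaluating the receiver sum
\[K(\mr,\mr'):=\sum_{n=1}^{N} G(\mb_n,\mr)\overline{G(\mb_n,\mr')}.\]
Once $K$ is shown to be, asymptotically, proportional to $J_0(\kb|\mr'-\mr|)+\frac{3}{\pi}\mathcal{E}(\mr',\mr,\ma_m)$, the claimed identity follows by pulling the constants out of the modulus.

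First I would compute $K$. Since $|\mb|=\SI{0.76}{\meter}$ is large compared with the support $D$ and $\omega$ is large, I would insert the large-argument asymptotics $H_0^{(1)}(z)\sim\sqrt{2/(\pi z)}\,e^{i(z-\pi/4)}$ and linearize the phase as $|\mb_n-\mr|\approx|\mb|-\vt_n\cdot\mr$ while freezing the amplitude at $|\mb|$. The two $e^{\mp i\pi/4}$ factors and the prefactors $-i/4$, $i/4$ then combine, and the product collapses to $G(\mb_n,\mr)\overline{G(\mb_n,\mr')}\approx \frac{1}{8\pi\kb|\mb|}e^{i\kb\vt_n\cdot(\mr'-\mr)}$, a plane wave in the receiver direction $\vt_n$ with direction-independent amplitude.

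Next, for large $N$ I would replace the sum over the equispaced receiver angles $\theta_n\in[\vartheta_m+\pi/3,\vartheta_m+5\pi/3]$ by an integral over the limited aperture, i.e. $\sum_{n}\to\int_{\vartheta_m+\pi/3}^{\vartheta_m+5\pi/3}(\cdot)\,\rd\theta$. Writing $\vt\cdot(\mr'-\mr)=|\mr'-\mr|\cos(\theta-\phi)$ and invoking the Jacobi--Anger expansion $e^{iz\cos\alpha}=J_0(z)+2\sum_{p=1}^{\infty}i^pJ_p(z)\cos(p\alpha)$ with $z=\kb|\mr'-\mr|$ and $\alpha=\theta-\phi$, I would integrate term by term. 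The $J_0$ term yields the arc length $4\pi/3$; for each $p\geq1$ a sum-to-product identity gives $\int\cos(p(\theta-\phi))\,\rd\theta=\frac{2}{p}(-1)^p\cos(p(\vartheta_m-\phi))\sin(2p\pi/3)$, and the identity $i^p(-1)^p=(-i)^p$ converts the prefactor into exactly the summand of $\mathcal{E}$. Factoring out $4\pi/3$ produces the bracket $J_0+\frac{3}{\pi}\mathcal{E}$, and collecting $\kb^2\cdot\frac{1}{8\pi\kb|\mb|}\cdot\frac{4\pi}{3}=\frac{\kb}{6|\mb|}$ yields \eqref{Structure_Single}.

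The main obstacle is the careful bookkeeping of the constants: the $\frac{1}{8\pi\kb|\mb|}$ from the Hankel product, the normalization implicit in the sum-to-integral passage, and the arc-length factor $4\pi/3$ must conspire to reproduce the stated prefactor $\frac{\kb}{6|\mb|}$, alongside the term-by-term integration that isolates the $\sin(2p\pi/3)$ and $\cos(p(\vartheta_m-\phi))$ factors. One must also justify interchanging the infinite Jacobi--Anger series with the $\theta$-integration, which follows from the uniform convergence of the series for bounded argument $\kb|\mr'-\mr|$ over the compact region $D$. It is worth emphasizing that the dependence of $\mathcal{E}$ on the emitter enters \emph{solely} through $\vartheta_m$, because the receiver aperture $[\vartheta_m+\pi/3,\vartheta_m+5\pi/3]$ is anchored to the source direction; this is precisely the mechanism that will later explain the sensitivity of the imaging performance to the source location.
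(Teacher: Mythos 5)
Your proposal is correct and follows essentially the same route as the paper's proof: the large-argument Hankel asymptotics collapse $G(\mb_n,\mr)\overline{G(\mb_n,\mr')}$ to $\frac{1}{8\pi\kb|\mb|}e^{i\kb\vt_n\cdot(\mr'-\mr)}$, the receiver sum is replaced by an integral over the aperture $[\vartheta_m+\pi/3,\vartheta_m+5\pi/3]$, and the Jacobi--Anger expansion integrated term by term produces $\frac{4\pi}{3}\bigl[J_0+\frac{3}{\pi}\mathcal{E}\bigr]$ with the constants combining to $\frac{\kb}{6|\mb|}$ exactly as in \eqref{Structure_Single}. The only cosmetic difference is that you derive the aperture-integral identity from the standard Jacobi--Anger series and a sum-to-product formula, whereas the paper quotes it as the prepackaged formula \eqref{Jacobi-Anger}.
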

\begin{proof}
Since $4\kb|\mr-\mb_n|\gg1$ for $n=1,2,\cdots,N$, the following asymptotic form holds (see \cite{CK} for instance)
\begin{equation}\label{Asymptotic_Hankel}
H_0^{(1)}(\kb|\mb_n-\mr|)=\frac{(1-i)e^{i\kb|\mb_n|}}{\sqrt{\kb|\mb_n|\pi}}e^{-i\kb\vt_n\cdot\mr}.
\end{equation}
Thus, we can examine that
\[u_{\scat}(\mb_n,\mr)\approx-\frac{\kb^2(1+i)e^{i\kb|\mb_n|}}{4\sqrt{\kb|\mb_n|\pi}}\int_D\left(\frac{\eps(\mr)-\epsb}{\epsb\mub}\right)G(\mr,\ma_m)e^{-i\kb\vt_n\cdot\mr}\rd\mr\]
and correspondingly, we have
\begin{align*}
&\sum_{n=1}^{N}u_{\scat}(\mb_n,\mr)\overline{G(\mb_n,\mr')}\\
&\approx-\sum_{n=1}^{N}\left(\frac{\kb^2(1+i)e^{i\kb|\mb_n|}}{4\sqrt{\kb|\mb_n|\pi}}\int_D\left(\frac{\eps(\mr)-\epsb}{\epsb\mub}\right)G(\mr,\ma_m)e^{-i\kb\vt_n\cdot\mr}\rd\mr\right)\frac{(-1+i)e^{-i\kb|\mb_n|}}{4\sqrt{\kb|\mb_n|\pi}}e^{i\kb\vt_n\cdot\mr'}\\
&=\frac{\kb}{8|\mb|\pi}\int_D\left(\frac{\eps(\mr)-\epsb}{\epsb\mub}\right)G(\mr,\ma_m)\left(\sum_{n=1}^{N}e^{i\kb\vt_n\cdot(\mr'-\mr)}\right)\rd\mr.
\end{align*}
Since $N$ is sufficiently large, $\vt\cdot(\mr'-\mr)=|\mr'-\mr|\cos(\theta-\phi)$, and the following relation holds uniformly (see \cite{P-SUB3} for instance),
\begin{equation}\label{Jacobi-Anger}
\int_{\alpha}^{\beta}e^{ix\cos(\theta-\phi)}\rd\theta=(\beta-\alpha)J_0(x)+4\sum_{p=1}^{\infty}\frac{i^p}{p}J_p(x)\cos\frac{p(\beta+\alpha-2\phi)}{2}\sin\frac{p(\beta-\alpha)}{2},
\end{equation}
we can evaluate
\begin{align*}
\sum_{n=1}^{N}e^{i\kb\vt_n\cdot(\mr'-\mr)}&\approx\int_{\mathbb{S}_m^1}e^{i\kb\vt\cdot(\mr'-\mr)}\rd\vt\\
&=\int_{\theta_1=\vartheta_m+\pi/6}^{\theta_N=\vartheta_m+5\pi/6}e^{i\kb|\mr'-\mr|\cos(\theta-\phi)}\rd\theta\\
&=\frac{4\pi}{3}J_0(\kb|\mr'-\mr|)+4\sum_{p=1}^{\infty}\frac{(-i)^p}{p}J_p(\kb|\mr'-\mr|)\cos\big(p(\vartheta_m-\phi)\big)\sin\left(\frac{2p}{3}\pi\right).
\end{align*}
Hence,
\[\sum_{n=1}^{N}u_{\scat}(\mb_n,\mr)\overline{G(\mb_n,\mr')}=\frac{\kb}{6|\mb|}\int_D\left(\frac{\eps(\mr)-\epsb}{\epsb\mub}\right)G(\mr,\ma_m)\left[J_0(\kb|\mr'-\mr|)+\frac{3}{\pi}\mathcal{E}(\mr',\mr,\ma_m)\right]\rd\mr\]
and correspondingly, \eqref{Structure_Single} can be derived.
\end{proof}

Based on the Theorem \ref{OSM_Single}, we can examine some properties of the indicator function.

\begin{remark}[Availability and limitation of object detection]\label{remark1}
Since $J_0(0)=1$ and $J_p(0)=0$ for $p=1,2,\ldots$, the resulting plot of indicator function $\mathfrak{F}_{\osm}(\mr',\ma_m)$ is expected to exhibit peaks of magnitudes $\frac{\kb}{6|\mb|}\left(\frac{\eps_s-\epsb}{\epsb\mub}\right)|G(\mr,\ma_m)|\area(D_s)$ at the $\mr'=\mr\in D_s$ sought.

Notice that the since $\mathfrak{F}_{\osm}(\mr',\ma_m)\propto|G(\mr,\ma_m)|$, imaging performance of the $\mathfrak{F}_{\osm}(\mr',\ma_m)$ will be significantly dependent on the position of the emitter $\mA_m$. Specially, if one applies extremely high frequency then the value of $|G(\mr,\ma_m)|$ becomes negligible because
\[|G(\mr,\ma_m)|=\left|\frac{(1-i)e^{i\kb|\mb_n|}}{\sqrt{\kb|\mb_n|\pi}}e^{-i\kb\vt_n\cdot\mr}\right|\longrightarrow0\quad\text{as}\quad\omega\longrightarrow\infty+.\]
Correspondingly, the value of $\mathfrak{F}_{\osm}(\mr',\ma_m)$ becomes negligible so that it will be unable to distinguish between unknown objects and several artifacts in the map of $\mathfrak{F}_{\osm}(\mr',\ma_m)$. Hence, we conclude that application of extremely high frequency does not guarantee the detection of unknown object through the OSM with single source.
\end{remark}

\begin{remark}[Detection of multiple objects]\label{remark2}
Suppose that there exists two objects $D_1$ and $D_2$ located at $\mr_1$ and $\mr_2$, respectively. Then, the following relation must satisfy to distinguish objects through the map of $\mathfrak{F}_{\osm}(\mr',\ma_m)$
\begin{equation}\label{Distinguish}
|\mr_1-\mr_2|>\frac{\lambda}{2},
\end{equation}
where $\lambda$ denotes positive wavelength.

In Section \ref{sec:4}, the scattered field data were generated in the presence of two circular shaped dielectric objects centered at $\mr_1=\SI{0.045}{\meter}$ and $\mr_2=-\SI{0.045}{\meter}$, i.e., $|\mr_1-\mr_2|=\SI{0.09}{\meter}$ (see \cite{BS} for instance). Notice that if $f=\SI{1}{\giga\hertz}$ then $\lambda/2=\SI{0.1499}{\meter}$ because $k=20.9585$ so that two objects cannot be distinguished through the map of $\mathfrak{F}_{\osm}(\mr',\ma_m)$ because $\lambda$ does not satisfies the relation \eqref{Distinguish}. If $f\geq\SI{2}{\giga\hertz}$ then $D_1$ and $D_2$ can be distinguished because $\lambda/2\leq\SI{0.0749}{\meter}$. Hence, we conclude that application of low frequency does not guarantee the detection of unknown objects through the OSM with single source.
\end{remark}

\begin{remark}[Effect of the factor $\mathcal{E}(\mr',\mr,\ma_m)$]\label{Remark3}
Based on the \eqref{DisturbFactor}, the factor $\mathcal{E}(\mr',\mr,\ma_m)$ not only does not contribute to the identification of objects but also disturb the identification by generating several artifacts. In order to examine the influence of the $\mathcal{E}(\mr',\mr,\ma_m)$, we consider the following quantity
\[\mathcal{D}_1(x)=\frac{3}{\pi}\left|\sum_{p=1}^{10^6}\frac{(-i)^p}{p}J_p(\kb|x|)\sin\left(\frac{2p}{3}\pi\right)\right|.\]
This is similar to the value of $3\mathcal{E}(\mr',\mr,\ma_m)/\pi$ for $\vartheta_m=0$ (i.e., $\ma_m=(1,0))$ and $\mr=(0,0)$. By comparing the oscillating properties with $J_0(\kb|x|)$, we can say that due to the oscillating properties of the $J_0(\kb|\mr'-\mr|)$ and $\mathcal{E}(\mr',\mr,\ma_m)$, maps of $\mathfrak{F}_{\osm}(\mr',\ma_m)$ will contain several artifacts and it will disturb the recognization of the existence of objects, refer to Figure \ref{FigureBessel1}.
\end{remark}

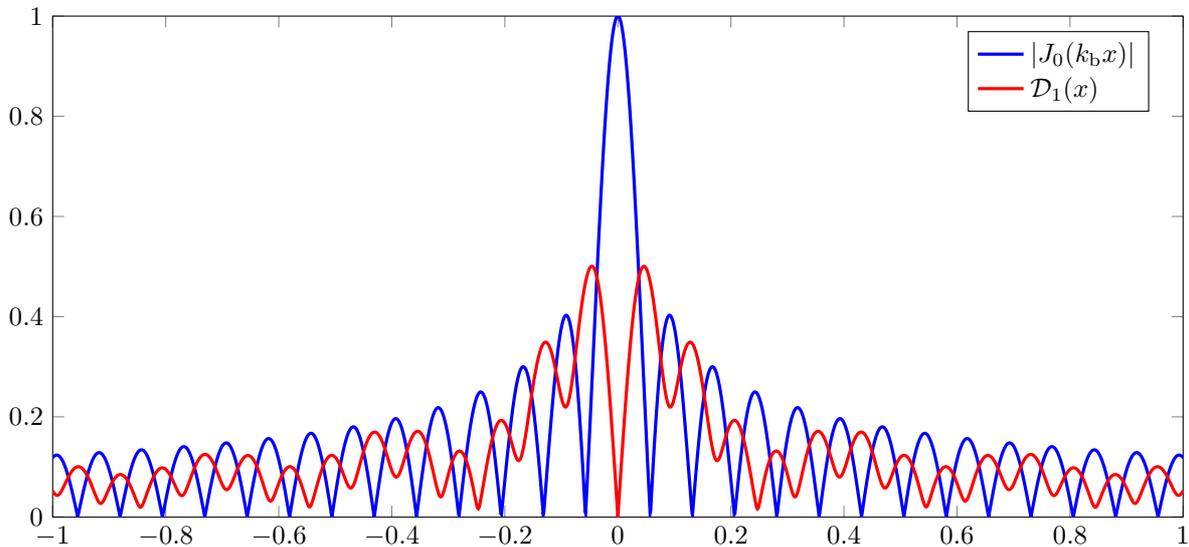
\begin{figure}[h]
\begin{center}
\begin{tikzpicture}
\begin{axis}
[legend style={fill=none},
width=\textwidth,
height=0.5\textwidth,
xmin=-1,
xmax=1,
ymin=0,
ymax=1,
legend pos=north east,
legend cell align={left}]
\addplot[line width=1.2pt,solid,color=blue] %
	table[x=x,y=y,col sep=comma]{BesselFunctions1.csv};
\addlegendentry{$|J_0(\kb x)|$};
\addplot[line width=1.2pt,solid,color=red] %
	table[x=x,y=z,col sep=comma]{BesselFunctions1.csv};
\addlegendentry{$\mathcal{D}_1(x)$};
\end{axis}
\end{tikzpicture}
\caption{\label{FigureBessel1} Plots of $|J_0(\kb x)|$ and $\mathcal{D}_1(x)$ for $-1\leq x\leq1$ and $f=\SI{2}{\giga\hertz}$.}
\end{center}
\end{figure}

\section{Simulation results with experimental data}\label{sec:4}
Here, we exhibit simulation results using experimental data \cite{BS}. The emitters and receivers are placed on the circles centered at the origin with radii $|\ma_m|=\SI{0.72}{\meter}$ and $|\mb_n|=\SI{0.76}{\meter}$, respectively, the the imaging region $\Omega$ was chosen as a square $(-\SI{0.1}{\meter},\SI{0.1}{\meter})\times(-\SI{0.1}{\meter},\SI{0.1}{\meter})$ to satisfy the relation $4\kb|\mr-\ma_m|$, $4\kb|\mr-\mb_n|\gg1$ for $m=1,2,\ldots,M$ and $n=1,2,\ldots,N$. The range of receivers is restricted from $\SI{60}{\degree}$ to $\SI{300}{\degree}$, with step size of $\SI{5}{\degree}$ based on each location of emitters. We refer to Figure \ref{Configuration_Fresnel} again for an illustration of measurement configuration.

The objects are composed of two filled dielectric cylinders $D_1$ and $D_2$ with circular cross section of radius $\SI{0.015}{\meter}$ and permittivity $\eps_s=(3\pm0.3)\epsb$ centered at $\mr_1=(0.045,0.010)\SI{}{\meter}$ and $\mr_2=(-0.045,0)\SI{}{\meter}$, respectively\footnote{In \cite{BS}, $\mr_1=(0.045,0)\SI{}{\meter}$ was given but throughout several results \cite{BT,BAB,CPR,KLAHP,KP4,MBQL}, accurate location of $D_1$ seems $\mr_1=(0.045,0.010)\SI{}{\meter}$.}. With this setting, we generated the imaging results $\mathfrak{F}_{\osm}(\mr',\ma_m)$ with $m=1$, $10$, and $25$, refer to Figure \ref{SimulationSetting} for illustration.

\begin{figure}[h]
\begin{center}
\subfigure[Simulation setup with $\mA_1$]{
\begin{tikzpicture}[scale=1.5]

\def\RT{0.72*2};
\def\RR{0.76*2};
\def\Edge{0.15*2};
\def\BD{0.85*2};

\draw[green,fill=green] ({\RT*cos(0)},{\RT*sin(0)}) circle (0.05cm);
\draw[black,fill=black] ({\RT*cos(0)},{\RT*sin(0)}) circle (0.02cm);

\foreach \beta in {60,65,...,300}
{\draw[red,fill=red] ({\RR*cos(\beta)},{\RR*sin(\beta)}) circle (0.05cm);
\draw[black,fill=black] ({\RR*cos(\beta)},{\RR*sin(\beta)}) circle (0.02cm);}

\draw[green,fill=green] (-0.7,0.15) circle (0.05cm);
\draw[black,fill=black] (-0.7,0.15) circle (0.02cm);
\node[right] at (-0.7,0.15) {\texttt{~emitter}};
\draw[red,fill=red] (-0.7,-0.15) circle (0.05cm);
\draw[black,fill=black] (-0.7,-0.15) circle (0.02cm);
\node[right] at (-0.7,-0.15) {\texttt{~receiver}};

\draw[gray!50!white] (\BD,\BD) -- (\BD,-\BD) -- (-\BD,-\BD) -- (-\BD,\BD) -- cycle;
\end{tikzpicture}}\hfill
\subfigure[Simulation setup with $\mA_{10}$]{
\begin{tikzpicture}[scale=1.5]

\def\RT{0.72*2};
\def\RR{0.76*2};
\def\Edge{0.15*2};
\def\BD{0.85*2};

\draw[green,fill=green] ({\RT*cos(90)},{\RT*sin(90)}) circle (0.05cm);
\draw[black,fill=black] ({\RT*cos(90)},{\RT*sin(90)}) circle (0.02cm);

\foreach \beta in {150,155,...,390}
{\draw[red,fill=red] ({\RR*cos(\beta)},{\RR*sin(\beta)}) circle (0.05cm);
\draw[black,fill=black] ({\RR*cos(\beta)},{\RR*sin(\beta)}) circle (0.02cm);}

\draw[green,fill=green] (-0.7,0.15) circle (0.05cm);
\draw[black,fill=black] (-0.7,0.15) circle (0.02cm);
\node[right] at (-0.7,0.15) {\texttt{~emitter}};
\draw[red,fill=red] (-0.7,-0.15) circle (0.05cm);
\draw[black,fill=black] (-0.7,-0.15) circle (0.02cm);
\node[right] at (-0.7,-0.15) {\texttt{~receiver}};

\draw[gray!50!white] (\BD,\BD) -- (\BD,-\BD) -- (-\BD,-\BD) -- (-\BD,\BD) -- cycle;
\end{tikzpicture}}
\hfill
\subfigure[Simulation setup with $\mA_{25}$]{
\begin{tikzpicture}[scale=1.5]

\def\RT{0.72*2};
\def\RR{0.76*2};
\def\Edge{0.15*2};
\def\BD{0.85*2};

\draw[green,fill=green] ({\RT*cos(240)},{\RT*sin(240)}) circle (0.05cm);
\draw[black,fill=black] ({\RT*cos(240)},{\RT*sin(240)}) circle (0.02cm);

\foreach \beta in {300,305,...,540}
{\draw[red,fill=red] ({\RR*cos(\beta)},{\RR*sin(\beta)}) circle (0.05cm);
\draw[black,fill=black] ({\RR*cos(\beta)},{\RR*sin(\beta)}) circle (0.02cm);}

\draw[green,fill=green] (-0.7,0.15) circle (0.05cm);
\draw[black,fill=black] (-0.7,0.15) circle (0.02cm);
\node[right] at (-0.7,0.15) {\texttt{~emitter}};
\draw[red,fill=red] (-0.7,-0.15) circle (0.05cm);
\draw[black,fill=black] (-0.7,-0.15) circle (0.02cm);
\node[right] at (-0.7,-0.15) {\texttt{~receiver}};

\draw[gray!50!white] (\BD,\BD) -- (\BD,-\BD) -- (-\BD,-\BD) -- (-\BD,\BD) -- cycle;
\end{tikzpicture}}
\caption{\label{SimulationSetting}Illustration of antenna arrangement with transmitters $\mA_1$, $\mA_{10}$, and $\mA_{25}$.}
\end{center}
\end{figure}
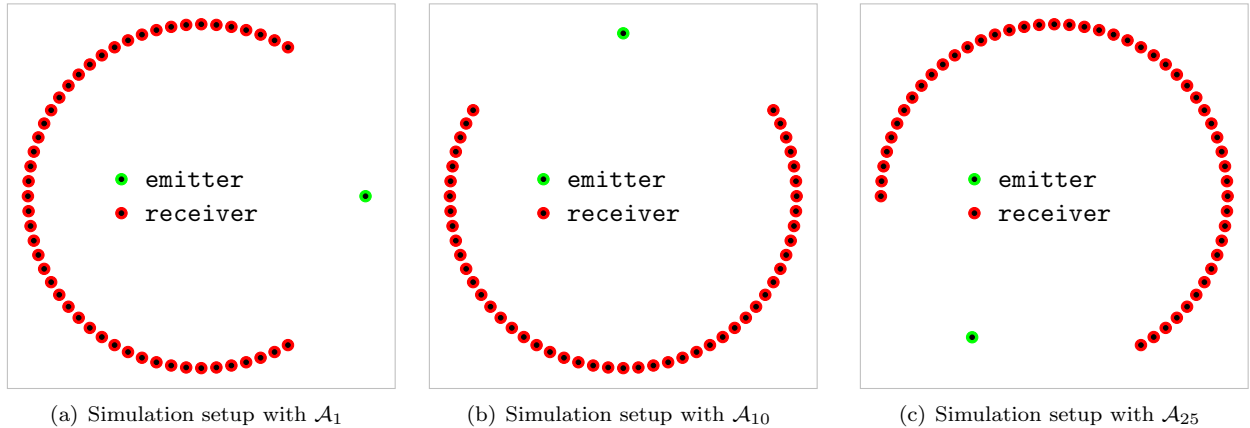

Figure \ref{Single1} shows maps of $\mathfrak{F}_{\osm}(\mr',\ma_1)$ with $\ma_1=\SI{0.72}{\meter}(\cos\SI{0}{\degree},\sin\SI{0}{\degree})$ at several frequencies. Based on the simulation results, it is impossible to recognize objects through the map of $\mathfrak{F}_{\osm}(\mr',\ma_1)$ when $f\leq\SI{2}{\giga\hertz}$ and $f\geq\SI{6}{\giga\hertz}$. Fortunately, peaks of large magnitudes are appeared at $\mr_1$ and $\mr_2$ so that we can recognize the existence of two objects but their outline shape cannot be determined.

\begin{figure}[h]
\begin{center}
\subfigure[$f=\SI{1}{\giga\hertz}$]{\includegraphics[width=.33\textwidth]{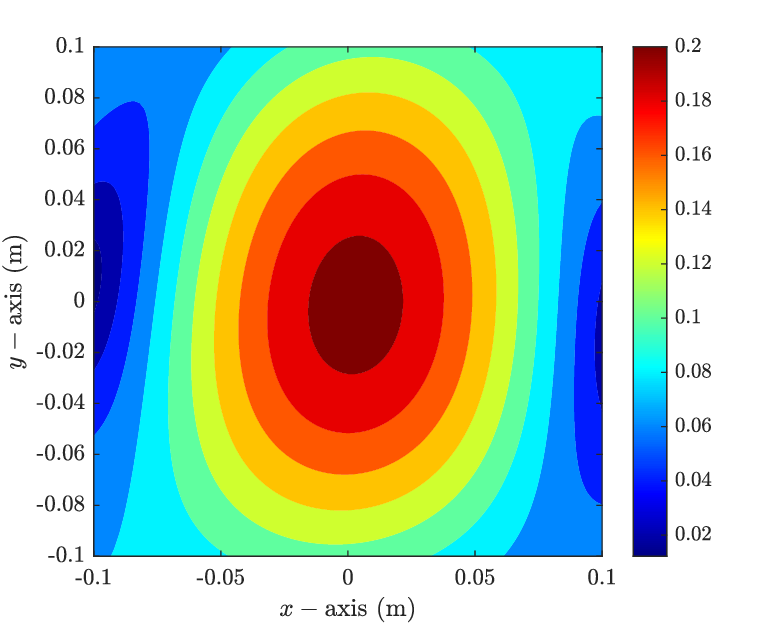}}\hfill
\subfigure[$f=\SI{2}{\giga\hertz}$]{\includegraphics[width=.33\textwidth]{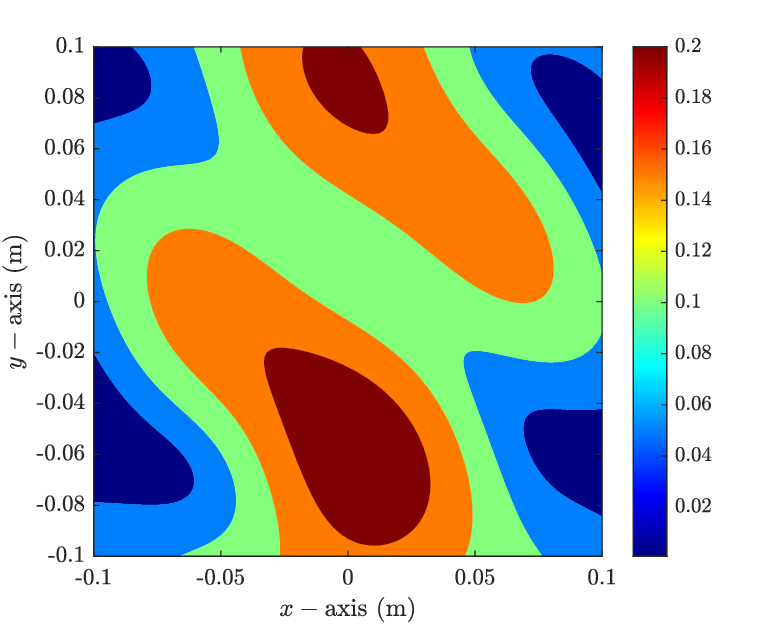}}\hfill
\subfigure[$f=\SI{3}{\giga\hertz}$]{\includegraphics[width=.33\textwidth]{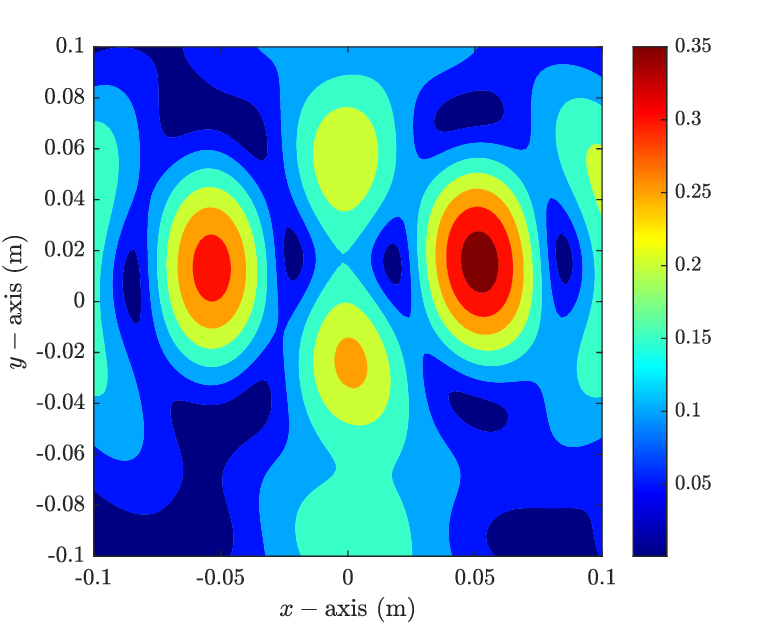}}\\
\subfigure[$f=\SI{4}{\giga\hertz}$]{\includegraphics[width=.33\textwidth]{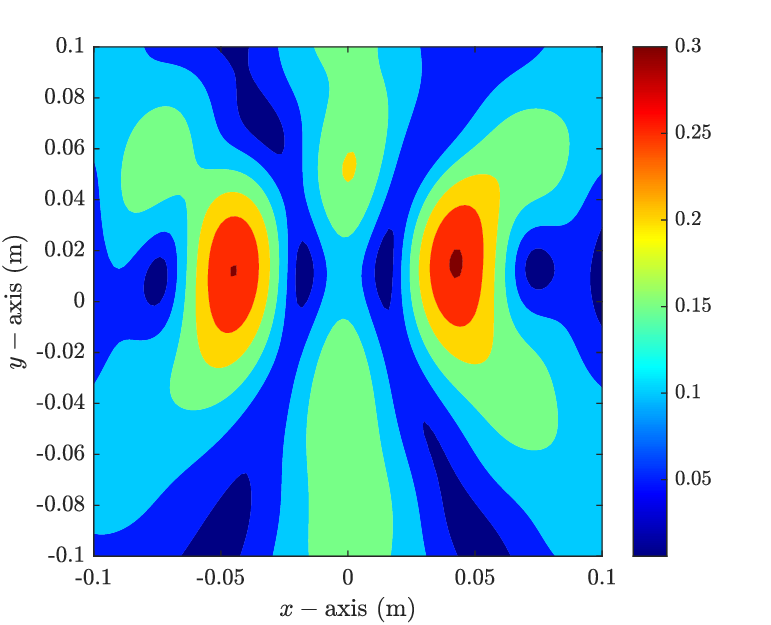}}\hfill
\subfigure[$f=\SI{6}{\giga\hertz}$]{\includegraphics[width=.33\textwidth]{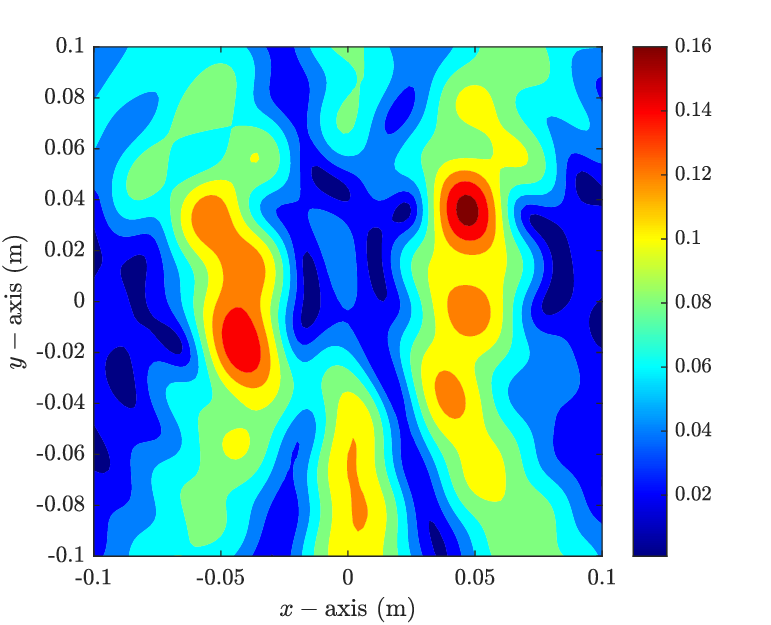}}\hfill
\subfigure[$f=\SI{8}{\giga\hertz}$]{\includegraphics[width=.33\textwidth]{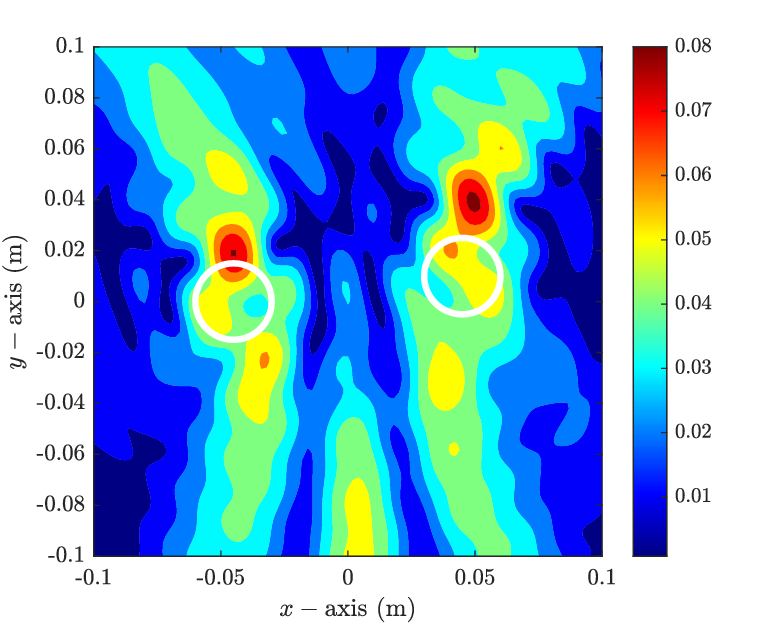}}
\caption{\label{Single1}Maps of $\mathfrak{F}_{\osm}(\mr',\ma_1)$. White colored circles describes the boundary of objects.}
\end{center}
\end{figure}

Figure \ref{Single10} shows maps of $\mathfrak{F}_{\osm}(\mr',\ma_{10})$ with $\ma_{10}=\SI{0.72}{\meter}(\cos\SI{90}{\degree},\sin\SI{90}{\degree})$ at several frequencies. In contrast to the results in Figure \ref{Single1}, it is possible to recognize the existence of two object at $f=\SI{2}{\giga\hertz}$ but it is still impossible to recognize them when $f=\SI{1}{\giga\hertz}$ and $f\geq\SI{6}{\giga\hertz}$. Moreover, it seems to be difficult to recognize the existence of objects when $f=\SI{4}{\giga\hertz}$ due to the appearance of two artifacts with large magnitudes.

\begin{figure}[h]
\begin{center}
\subfigure[$f=\SI{1}{\giga\hertz}$]{\includegraphics[width=.33\textwidth]{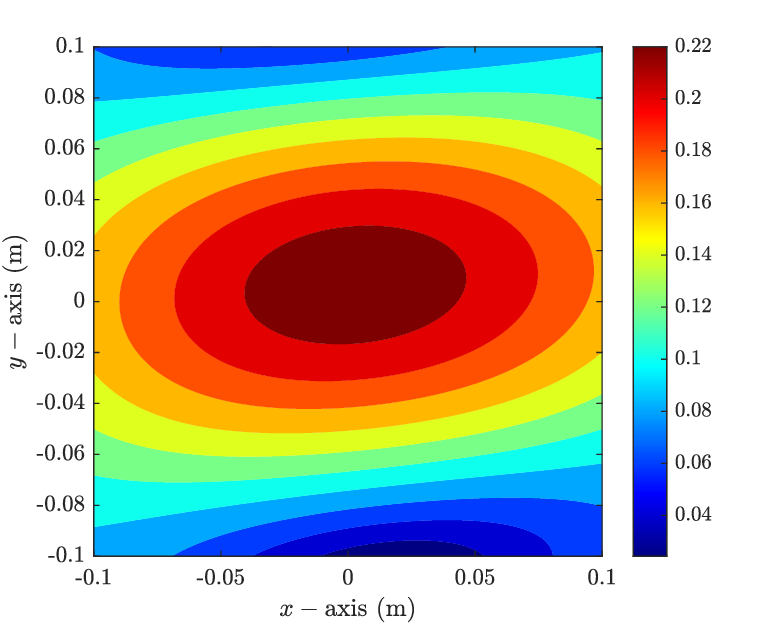}}\hfill
\subfigure[$f=\SI{2}{\giga\hertz}$]{\includegraphics[width=.33\textwidth]{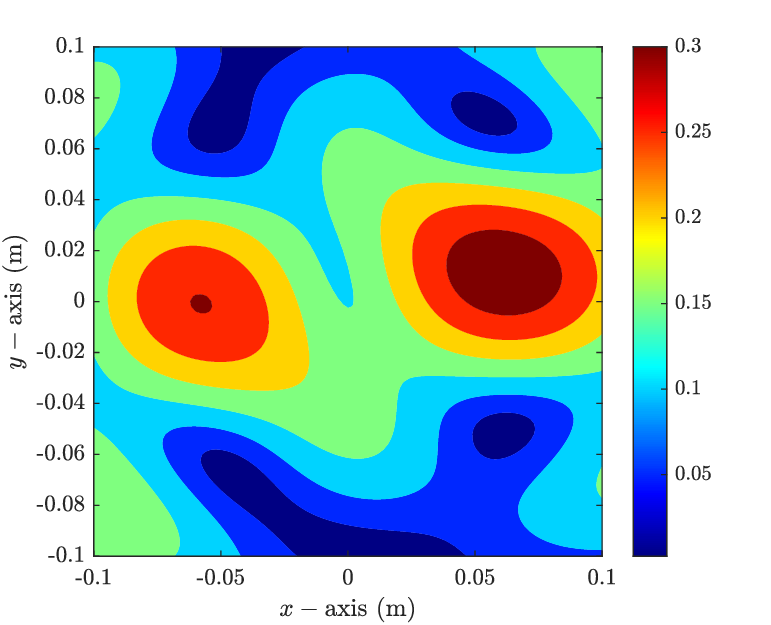}}\hfill
\subfigure[$f=\SI{3}{\giga\hertz}$]{\includegraphics[width=.33\textwidth]{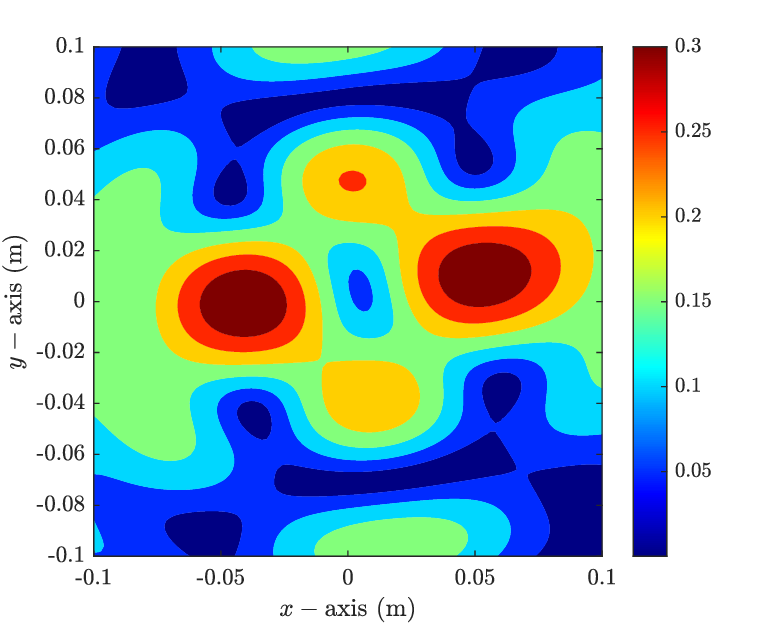}}\\
\subfigure[$f=\SI{4}{\giga\hertz}$]{\includegraphics[width=.33\textwidth]{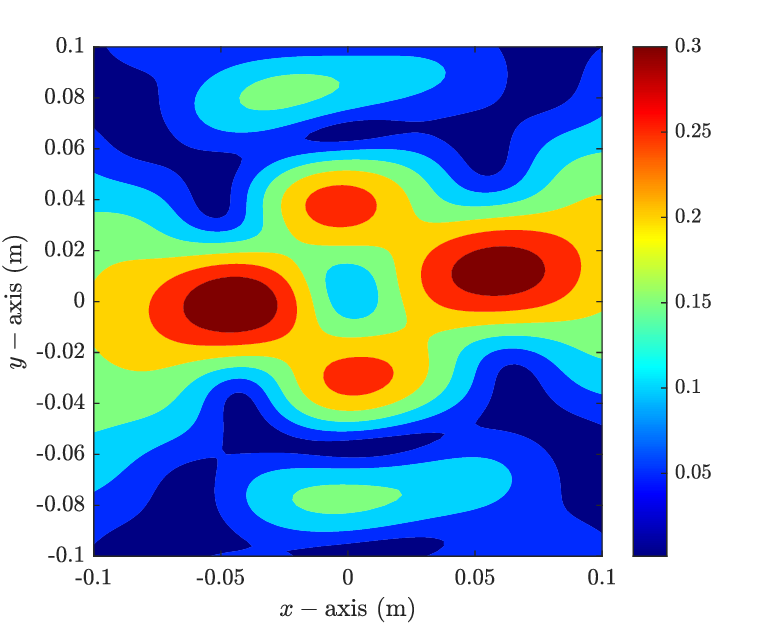}}\hfill
\subfigure[$f=\SI{6}{\giga\hertz}$]{\includegraphics[width=.33\textwidth]{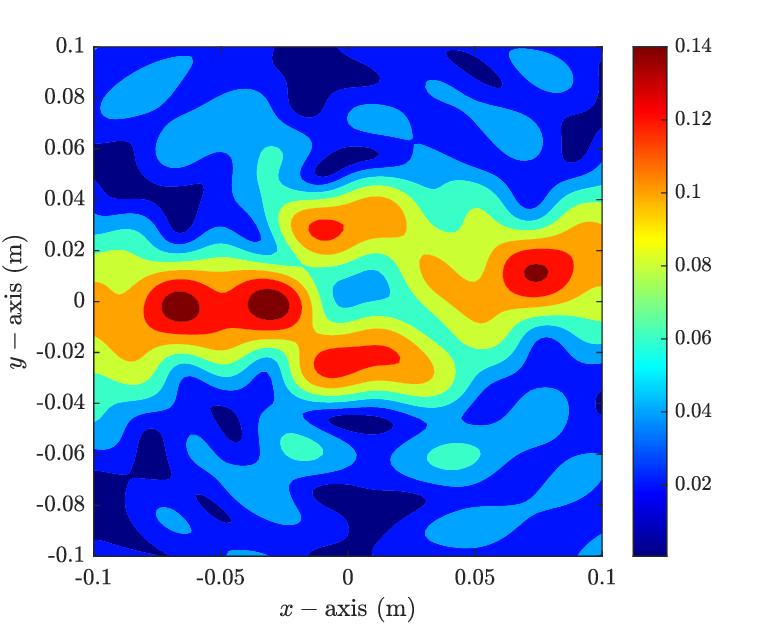}}\hfill
\subfigure[$f=\SI{8}{\giga\hertz}$]{\includegraphics[width=.33\textwidth]{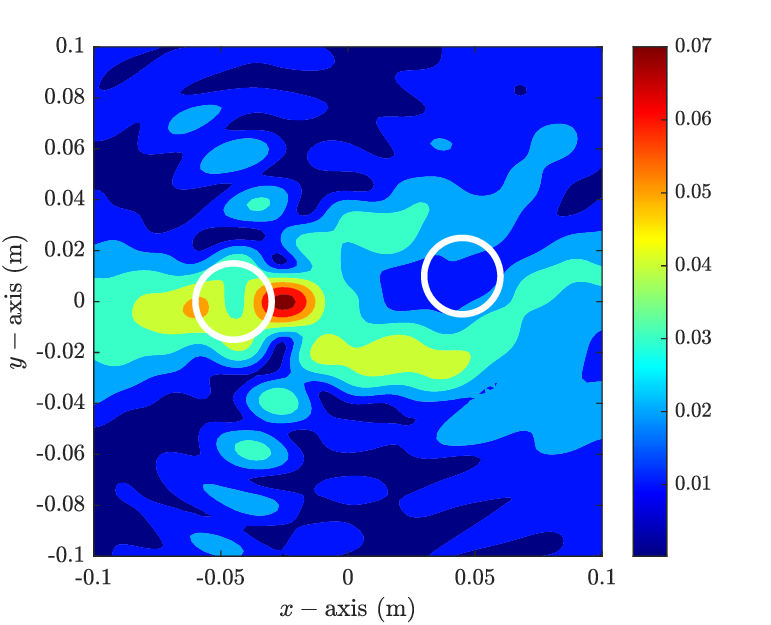}}
\caption{\label{Single10}Maps of $\mathfrak{F}_{\osm}(\mr',\ma_{10})$. White colored circles describes the boundary of objects.}
\end{center}
\end{figure}

Figure \ref{Single25} shows maps of $\mathfrak{F}_{\osm}(\mr',\ma_{25})$ with $\ma_{25}=\SI{0.72}{\meter}(\cos\SI{240}{\degree},\sin\SI{240}{\degree})$ at several frequencies. In contrast to the results in Figures \ref{Single1} and \ref{Single10}, the existence of two objects can be recognized when $f=\SI{2}{\giga\hertz}$ and only a peak of large magnitude appeared when $\mr'\in D_1$ when $f=3,\SI{4}{\giga\hertz}$. Same as previously, it is still impossible to recognize objects when $f=\SI{1}{\giga\hertz}$ and $f\geq\SI{6}{\giga\hertz}$.

\begin{figure}[h]
\begin{center}
\subfigure[$f=\SI{1}{\giga\hertz}$]{\includegraphics[width=.33\textwidth]{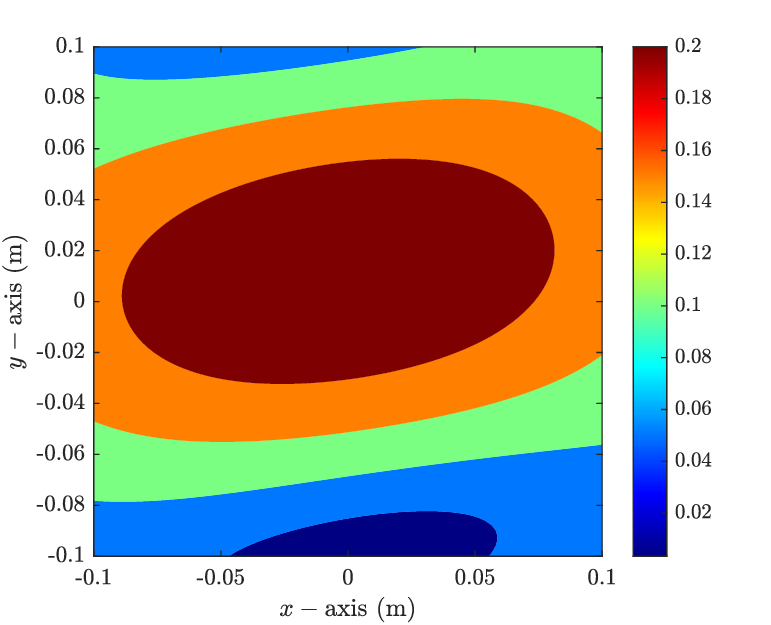}}\hfill
\subfigure[$f=\SI{2}{\giga\hertz}$]{\includegraphics[width=.33\textwidth]{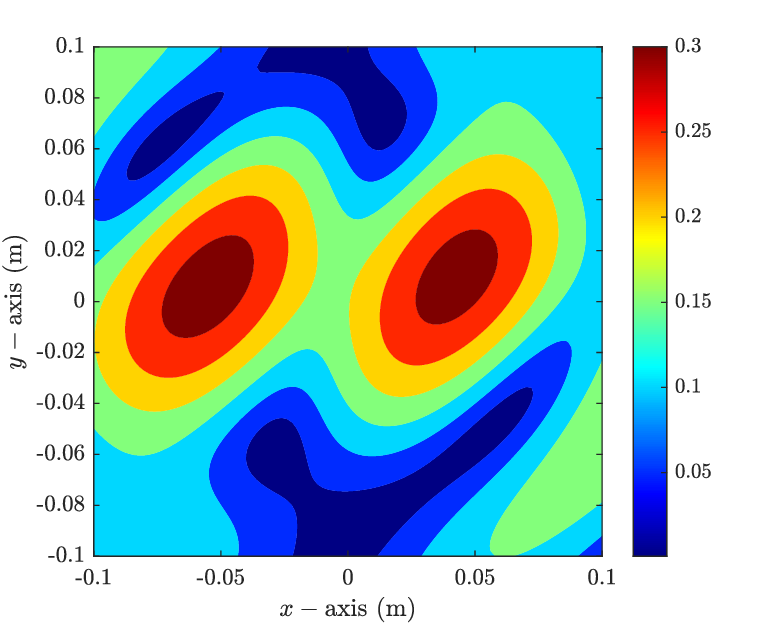}}\hfill
\subfigure[$f=\SI{3}{\giga\hertz}$]{\includegraphics[width=.33\textwidth]{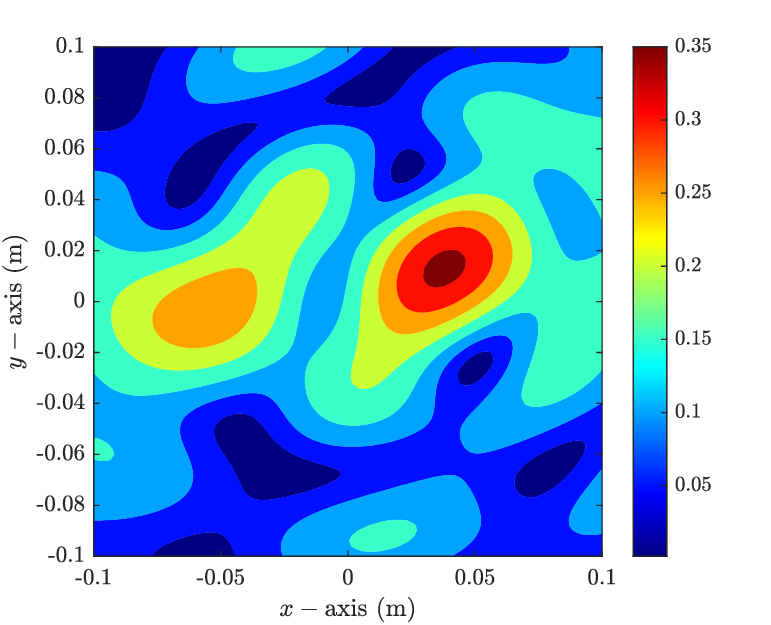}}\\
\subfigure[$f=\SI{4}{\giga\hertz}$]{\includegraphics[width=.33\textwidth]{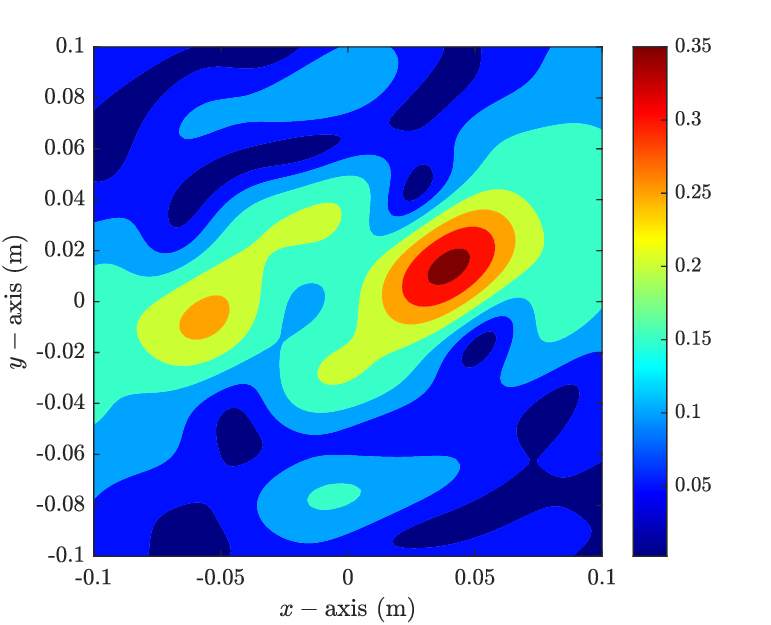}}\hfill
\subfigure[$f=\SI{6}{\giga\hertz}$]{\includegraphics[width=.33\textwidth]{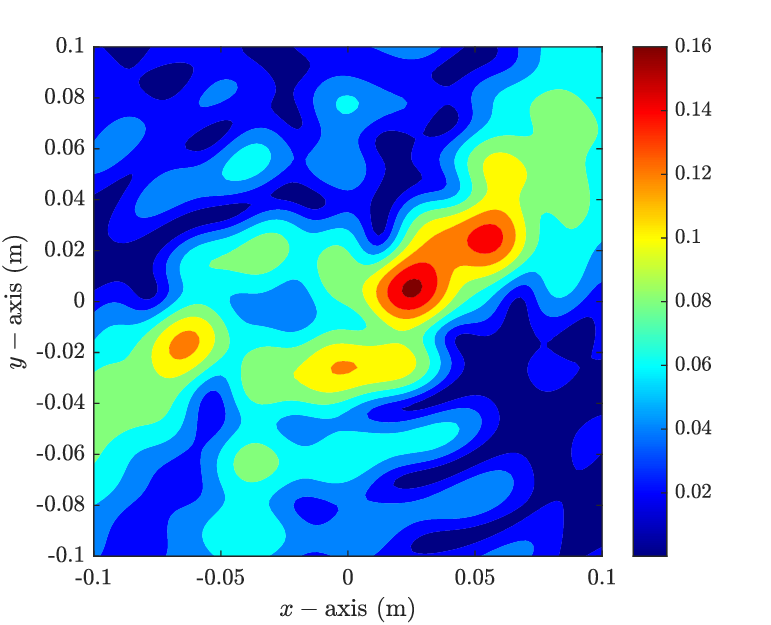}}\hfill
\subfigure[$f=\SI{8}{\giga\hertz}$]{\includegraphics[width=.33\textwidth]{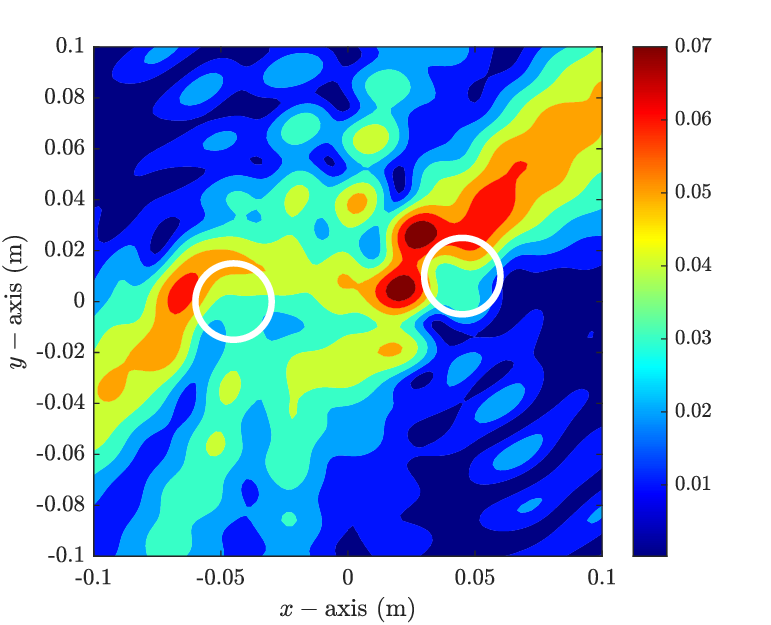}}
\caption{\label{Single25}Maps of $\mathfrak{F}_{\osm}(\mr',\ma_{25})$. White colored circles describes the boundary of objects.}
\end{center}
\end{figure}

Based on the simulation results, and Remarks \ref{remark1} and \ref{remark2}, we can conclude that the imaging performance of the OSM with single source is significantly dependent on the operated frequency and location of the emitter. Hence, design of another indicator function of the OSM seems required for a proper improvement of the imaging performance.

\section{Indicator function with multiple sources: analysis and simulation results}\label{sec:5}
Following to the several studies \cite{ACP,AGJKLSW,AGKPS,P-SUB3,P-SUB16,P-SUB18,P1}, it has been confirmed that application of multiple sources and/or frequencies successfully improves the imaging performance. Following to \cite{BIPAC,P1}, one can examine  several simulation results for the improvement of the multi-frequency OSM. Hence, we consider the application of multiple sources at a fixed frequency. Following to \cite{P1}, the following indicator function (say, OSMM) can be used: for each $\mr'\in\Omega$
\[\mathfrak{F}_{\osmm}(\mr')=\sum_{m=1}^{M}\mathfrak{F}_{\osm}(\mr',\ma_m).\]
Although one can obtain good result via the map of $\mathfrak{F}_{\osmm}(\mr')$, we introduce another indicator function to obtain a better result. To this end, we denote $\mF(\mr')$ as the following arrangement
\begin{align*}
\mF(\mr')&=\Big(\Phi(\mr',\ma_1),\Phi(\mr',\ma_2),\ldots,\Phi(\mr',\ma_N)\Big)\\
&=\frac{\kb}{6|\mb|}\begin{pmatrix}
\medskip\displaystyle\int_D\left(\frac{\eps(\mr)-\epsb}{\epsb\mub}\right)G(\mr,\ma_1)\bigg[J_0(k|\mr'-\mr|)+\frac{3}{\pi}\mathcal{E}(\mr',\mr,\ma_1)\bigg]\rd\mr\\
\displaystyle\int_D\left(\frac{\eps(\mr)-\epsb}{\epsb\mub}\right)G(\mr,\ma_2)\bigg[J_0(k|\mr'-\mr|)+\frac{3}{\pi}\mathcal{E}(\mr',\mr,\ma_2)\bigg]\rd\mr\\
\medskip\vdots\\
\displaystyle\int_D\left(\frac{\eps(\mr)-\epsb}{\epsb\mub}\right)G(\mr,\ma_m)\bigg[J_0(k|\mr'-\mr|)+\frac{3}{\pi}\mathcal{E}(\mr',\mr,\ma_m)\bigg]\rd\mr
\end{pmatrix}^T.
\end{align*}
where $\Phi(\mr',\ma_m)$ satisfies $\mathfrak{F}_{\osm}(\mr',\ma_m)=|\Phi(\mr',\ma_m)|$.
Then, based on the structure of the $\mF(\mr')$, it seems natural to test the orthogonality relation between the $\Phi(\mr',\ma_m)$ and $G(\cdot,\ma_m)$. Thus, by introducing a test vector,
\[\mH(\mr')=\Big(G(\mr',\ma_1),G(\mr',\ma_2),\ldots,G(\mr',\ma_M)\Big),\quad\mr'\in\Omega,\]
the following indicator function (say, MOSM) with multiple sources can be introduced
\[\mathfrak{F}_{\mosm}(\mr')=|\mF(\mr')\cdot\overline{\mH(\mr')}|=\left|\sum_{m=1}^{M}\Phi(\mr',\ma_m)\overline{G(\mr',\ma_m)}\right|.\]
The map of $\mathfrak{F}_{\mosm}(\mr')$ will contain peaks of large magnitude at $\mr'\in D_s$ thereby, it will be possible to recognize the existence or outline shape of $D_s$, $s=1,2,\ldots,S$. In order to discover the feasibility and some properties of the $\mathfrak{F}_{\mosm}(\mr')$, we derive the following result.

\begin{theorem}\label{OSM_Multiple}
  Let $\vv=(\cos\vartheta,\sin\vartheta)$, $\vv_m=(\cos\vartheta_m,\sin\vartheta_m)$, and $\mr'-\mr=|\mr'-\mr|(\cos\phi,\sin\phi)$. Then, for sufficiently large $M$ and $\omega$, $\mathfrak{F}_{\mosm}(\mr')$ can be represented as follows:
  \begin{equation}\label{Structure_Multiple}
    \mathfrak{F}_{\mosm}(\mr')=\left|\frac{2}{3|\ma||\mb|}\int_D\left(\frac{\eps(\mr)-\epsb}{\epsb\mub}\right)\left(J_0(\kb|\mr'-\mr|)^2+\frac{3}{\pi}\mathcal{M}(\mr',\mr)\right)\rd\mr\right|,
  \end{equation}
  where
  \[\mathcal{M}(\mr',\mr)=\sum_{p=1}^{\infty}\frac{i^p}{p}J_p(\kb|\mr'-\mr|)^2\sin\left(\frac{2p}{3}\pi\right).\]
\end{theorem}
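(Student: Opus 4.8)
The plan is to mirror the derivation of Theorem~\ref{OSM_Single}, now exploiting that the emitters $\vv_m=(\cos\vartheta_m,\sin\vartheta_m)$ are equidistributed over the \emph{entire} unit circle $\mathbb{S}^1$. First I would start from $\mathfrak{F}_{\mosm}(\mr')=\big|\sum_{m=1}^{M}\Phi(\mr',\ma_m)\overline{G(\mr',\ma_m)}\big|$ and insert the closed form of $\Phi(\mr',\ma_m)$ supplied by \eqref{Structure_Single}. The only genuinely new ingredient relative to the single-source proof is the presence of both $G(\mr,\ma_m)$ (inside $\Phi$) and the conjugated test function $\overline{G(\mr',\ma_m)}$. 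Applying the far-field asymptotic \eqref{Asymptotic_Hankel} to each of these, with the receiver $\mb_n$ replaced by the emitter $\ma_m$ and $|\ma_m|\equiv|\ma|$, I would compute the product $G(\mr,\ma_m)\overline{G(\mr',\ma_m)}$: the Hankel prefactors combine through $|1-i|^2=2$ and the cancellation $e^{i\kb|\ma|}e^{-i\kb|\ma|}=1$, leaving a geometry-independent constant times the single phase factor $e^{i\kb\vv_m\cdot(\mr'-\mr)}$. Writing $\vv_m\cdot(\mr'-\mr)=|\mr'-\mr|\cos(\vartheta_m-\phi)$ reduces the summand to $e^{i\kb|\mr'-\mr|\cos(\vartheta_m-\phi)}\big[J_0(\kb|\mr'-\mr|)+\tfrac{3}{\pi}\mathcal{E}(\mr',\mr,\ma_m)\big]$ times that constant and the material contrast.

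Next I would interchange the finite sum over $m$ with the integral over $D$ and, for large $M$, replace $\sum_{m=1}^{M}$ by $\int_0^{2\pi}(\cdot)\,\rd\vartheta$ over the full circle, exactly as the arc integral was used in the proof of Theorem~\ref{OSM_Single}. The crucial difference is the range: because the emitters cover all of $\mathbb{S}^1$ rather than an arc, the Jacobi--Anger identity \eqref{Jacobi-Anger} with $\beta-\alpha=2\pi$ forces every nonzero-order term to carry the factor $\sin(p\pi)=0$, so $\int_0^{2\pi}e^{i\kb|\mr'-\mr|\cos(\vartheta-\phi)}\,\rd\vartheta=2\pi J_0(\kb|\mr'-\mr|)$. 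Multiplying by the $J_0$ already present in the bracket produces the leading $J_0(\kb|\mr'-\mr|)^2$ term of \eqref{Structure_Multiple}.

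For the contribution of $\mathcal{E}$, I would insert its series \eqref{DisturbFactor}, interchange the $p$-summation with the $\vartheta$-integral (justified by the uniform decay of $J_p$ in $p$ at fixed argument), and evaluate each resulting term through the standard Bessel integral $\int_0^{2\pi}e^{i\kb|\mr'-\mr|\cos(\vartheta-\phi)}\cos\big(p(\vartheta-\phi)\big)\,\rd\vartheta=2\pi i^{p}J_p(\kb|\mr'-\mr|)$. The phase $(-i)^p$ carried by $\mathcal{E}$ then combines with the $i^{p}$ produced by this integral, while the $J_p(\kb|\mr'-\mr|)$ already inside $\mathcal{E}$ multiplies the one delivered by the integral, so each summand collapses to a multiple of $J_p(\kb|\mr'-\mr|)^2\sin(2p\pi/3)$, i.e.\ exactly the series $\mathcal{M}(\mr',\mr)$. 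Collecting the constants from the two Hankel asymptotics together with the factor $2\pi$ from the circle integral and taking the modulus then yields \eqref{Structure_Multiple}.

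The main obstacle, and the conceptual heart of the result, is this second Bessel integral over the \emph{full} period. It is precisely the full-circle averaging over emitter positions that converts the direction-dependent factor $\cos(p(\vartheta_m-\phi))$, which is responsible for the source-location-dependent artifacts of the single-source method (cf.\ Remark~\ref{Remark3}), into the direction-free quantity $J_p(\kb|\mr'-\mr|)^2$. Since the final expression depends on $\mr$ and $\mr'$ only through $|\mr'-\mr|$, the map no longer favors any emitter direction, which is what underlies the claimed unique determination. The remaining care is routine: controlling the Riemann-sum error in $\sum_{m}\approx\int_0^{2\pi}$ for large $M$, and justifying the term-by-term integration of the Bessel series.
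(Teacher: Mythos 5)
Your proposal follows essentially the same route as the paper's proof: substitute the closed form of $\Phi(\mr',\ma_m)$ from \eqref{Structure_Single}, apply the far-field asymptotic \eqref{Asymptotic_Hankel} to the emitter Green's functions so that $G(\mr,\ma_m)\overline{G(\mr',\ma_m)}$ reduces to a constant times $e^{i\kb\vv_m\cdot(\mr'-\mr)}$, replace the sum over $m$ by a full-circle integral, and evaluate the two resulting integrals to get $J_0(\kb|\mr'-\mr|)$ and $i^pJ_p(\kb|\mr'-\mr|)$, which pair with the $J_0$ and $J_p$ already present in the bracket to produce $J_0^2$ and the series $\mathcal{M}$ (the paper derives the second integral by expanding the exponential via \eqref{Jacobi-Anger} and using orthogonality of cosines, which is the same standard Bessel identity you invoke directly). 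The argument and all key steps match; no substantive difference.
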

\begin{proof}
Based on \eqref{Structure_Single}, we have
\begin{align*}
&\sum_{m=1}^{M}\Phi(\mr',\ma_m)\overline{G(\mr',\ma_m)}\\
&=\sum_{m=1}^{M}\left(\frac{\kb}{6|\mb|}\int_D\left(\frac{\eps(\mr)-\epsb}{\epsb\mub}\right)G(\mr,\ma_m)\left[J_0(\kb|\mr'-\mr|)+\frac{12}{5\pi}\mathcal{E}(\mr',\mr,\ma_m)\right]\rd\mr\right)\overline{G(\mr',\ma_m)}\\
&=\frac{\kb}{6|\mb|}\int_D\left(\frac{\eps(\mr)-\epsb}{\epsb\mub}\right)\sum_{m=1}^{M}\left(G(\mr,\ma_m)\overline{G(\mr',\ma_m)}\left[J_0(\kb|\mr'-\mr|)+\frac{12}{5\pi}\mathcal{E}(\mr',\mr,\ma_m)\right]\right)\rd\mr.
\end{align*}
Since $M$ is sufficiently large, applying \eqref{Asymptotic_Hankel} and \eqref{Jacobi-Anger} yields
\begin{align}
\begin{aligned}\label{Term1}
\sum_{m=1}^{M}G(\mr,\ma_m)\overline{G(\mr',\ma_m)}&=\sum_{m=1}^{M}\frac{2}{\kb|\ma_m|\pi}e^{i\kb\vv_m\cdot(\mr'-\mr)}\\
&=\frac{2}{\kb|\ma|\pi}\int_{\mathbb{S}^1}e^{i\kb\vv\cdot(\mr'-\mr)}\rd\vv\\
&=\frac{2}{\kb|\ma|\pi}\int_0^{2\pi}e^{i\kb|\mr'-\mr|\cos(\vartheta-\phi)}\rd\vartheta=\frac{4}{\kb|\ma|}J_0(\kb|\mr'-\mr|)
\end{aligned}
\end{align}
and
\begin{align}
\begin{aligned}\label{Term2}
&\sum_{m=1}^{M}G(\mr,\ma_m)\overline{G(\ma_m,\mr)}\cos\big(p(\vartheta_m-\phi)\big)\\
&=\frac{2}{\kb|\ma|\pi}\int_0^{2\pi}\cos\big(p(\vartheta-\phi)\big)e^{i\kb|\mr'-\mr|\cos(\vartheta-\phi)}\rd\vartheta\\
&=\frac{2}{\kb|\ma|\pi}\int_0^{2\pi}\cos\big(p(\vartheta-\phi)\big)\bigg(J_0(\kb|\mr'-\mr|)+2\sum_{q=1}^{\infty}i^qJ_q(\kb|\mr'-\mr|)\cos\big(q(\vartheta-\phi)\big)\bigg)\rd\vartheta\\
&=\frac{4i^p}{\kb|\ma|} J_p(\kb|\mr'-\mr|).
\end{aligned}
\end{align}

Based on \eqref{Term1} and \eqref{Term2}, we can examine that
\begin{multline*}
\sum_{m=1}^{M}G(\mr,\ma_m)\overline{G(\ma_m,\mr)}\left[J_0(\kb|\mr'-\mr|)+\frac{3}{\pi}\mathcal{E}(\mr',\mr,\ma_m)\right]\\=\frac{4}{\kb|\ma|}\left[J_0(\kb|\mr'-\mr|)^2+\frac{3}{\pi}\sum_{p=1}^{\infty}\frac{i^p}{p}J_p(\kb|\mr'-\mr|)^2\sin\left(\frac{2p}{3}\pi\right)\right]
\end{multline*}
and correspondingly, we can derive \eqref{Structure_Multiple}.
\end{proof}

Based on the Theorem \ref{OSM_Multiple}, we can examine some properties of the indicator function $\mathfrak{F}_{\mosm}(\mr')$.

\begin{remark}[Availability of object detection]\label{RemarkM1}
Similar to the discussion in Remark \ref{remark1}, the resulting plot of indicator function $\mathfrak{F}_{\osm}(\mr',\ma_m)$ is expected to exhibit peaks of magnitudes $\frac{2}{3|\ma||\mb|}\left(\frac{\eps_s-\epsb}{\epsb\mub}\right)\area(D_s)$ at the $\mr'=\mr\in D_s$ sought. Notice that opposite to the single source case, the imaging performance is independent to the location of emitters and receivers so that for any frequency of operation, it will be possible to recognize the existence or outline shape of objects through the map of $\mathfrak{F}_{\mosm}(\mr')$.
\end{remark}

\begin{remark}[Effect of the factor $\mathcal{M}(\mr',\mr)$]\label{RemarkM2}
Similar to the OSM with single source, the factor $\mathcal{M}(\mr',\mr)$ of \eqref{Structure_Multiple} not only does not contribute to the identification of objects but also disturb the identification by generating several artifacts. In order to examine the influence of the $\mathcal{M}(\mr',\mr)$, we consider the following quantity
\[\mathcal{D}_2(x)=\frac{3}{\pi}\left|\sum_{p=1}^{10^6}\frac{i^p}{p}J_p(\kb x)^2\sin\left(\frac{2p}{3}\pi\right)\right|.\]
This is similar to the value of $3\mathcal{M}(\mr',\mr)/\pi$ for $\mr=(0,0)$. Opposite to the discussion in Remark \ref{Remark3}, we can say that although $J_0(\kb|x|)^2$ and $\mathcal{M}(\mr',\mr)$ generate some artifacts, their magnitudes can be negligible so that their influence of object detection should be disregarded. We refer to Figure \ref{FigureBessel2} for an illustration of the oscillating properties of $J_0(\kb x)^2$ and $\mathcal{D}_2(x)$.
\end{remark}

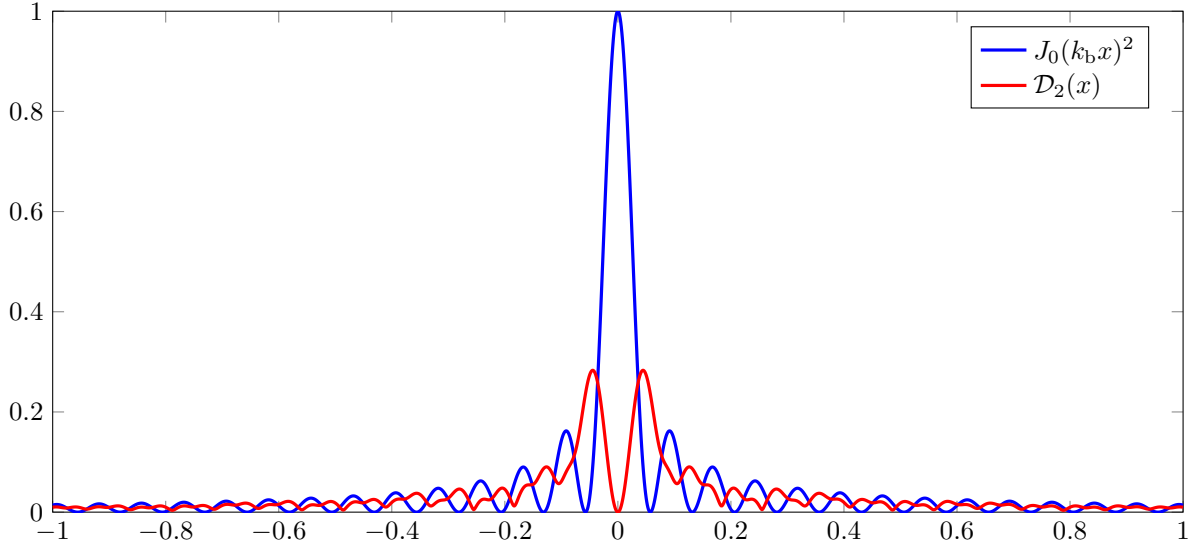
\begin{figure}[h]
\begin{center}
\begin{tikzpicture}
\begin{axis}
[legend style={fill=none},
width=\textwidth,
height=0.5\textwidth,
xmin=-1,
xmax=1,
ymin=0,
ymax=1,
legend pos=north east,
legend cell align={left}]
\addplot[line width=1.2pt,solid,color=blue] %
	table[x=x,y=y,col sep=comma]{BesselFunctions2.csv};
\addlegendentry{$J_0(\kb x)^2$};
\addplot[line width=1.2pt,solid,color=red] %
	table[x=x,y=z,col sep=comma]{BesselFunctions2.csv};
\addlegendentry{$\mathcal{D}_2(x)$};
\end{axis}
\end{tikzpicture}
\caption{\label{FigureBessel2}Plots of $J_0(\kb x)^2$ and $\mathcal{D}_2(x)$ for $-1\leq x\leq1$ and $f=\SI{2}{\giga\hertz}$.}
\end{center}
\end{figure}

\begin{remark}[Compare the imaging performance with single source OSM]\label{RemarkM3}
Here, we compare the imaging performance of the OSM with single and multiple sources. Based on the 1-D Plots of $\mathfrak{F}_{\osm}(\mr',\ma_1)$ and $\mathfrak{F}_{\mosm}(\mr')$ for $\mr'=(x,0)$ with $-1\leq x\leq 1$ in Figure \ref{FigureBessel3}, we can conclude that the $\mathfrak{F}_{\mosm}(\mr')$ will yield better images owing to less oscillation than $\mathfrak{F}_{\osm}(\mr',\ma_m)$.

For a more detailed description, since $\mathcal{E}(\mr',\mr,\ma_m)$ and $\mathcal{M}(\mr',\mr)$ are uniformly convergent, for each $\epsilon>0$, there exists $N=N(\epsilon)\in\mathbb{N}$ such that
\[\left|\mathcal{E}(\mr',\mr,\ma_m)-\sum_{p=1}^{N}\frac{(-i)^p}{p}J_p(\kb|\mr'-\mr|)\cos\big(p(\vartheta_m-\phi)\big)\sin\left(\frac{2p}{3}\pi\right)\right|<\epsilon\]
and
\[\left|\mathcal{M}(\mr',\mr)-\sum_{p=1}^{N}\frac{i^p}{p}J_p(\kb|\mr'-\mr|)^2\sin\left(\frac{2p}{3}\pi\right)\right|<\epsilon.\]

Suppose that $\mr'$ is not close to $\mr$ such that $\kb|\mr'-\mr|\gg1/4$. Then since following asymptotic form holds 
\[J_p(\kb|\mr'-\mr|)\approx\sqrt{\frac{2}{\kb|\mr'-\mr|}}\cos\left(\kb|\mr'-\mr|-\frac{p\pi}{2}-\frac{\pi}{4}+O\left(\frac{1}{\kb|\mr'-\mr|}\right)\right),\]
applying Euler–Maclaurin formula yields
\begin{align*}
|\mathcal{E}(\mr',\mr,\ma_m)|&\approx\left|\sum_{p=1}^{N}\frac{(-i)^p}{p}\sqrt{\frac{2}{\kb|\mr'-\mr|}}\cos\left(\kb|\mr'-\mr|-\frac{p\pi}{2}-\frac{\pi}{4}+O\left(\frac{1}{\kb|\mr'-\mr|}\right)\right)\sin\left(\frac{2p}{3}\pi\right)\right|\\
&\leq\sqrt{\frac{3}{2\kb|\mr'-\mr|}}\sum_{p=1}^{N}\frac{1}{p}=\sqrt{\frac{3}{2\kb|\mr'-\mr|}}\left(\ln N+\gamma+\frac{1}{2N}-c_p\right)\\
&\leq\sqrt{\frac{3}{2\kb|\mr'-\mr|}}\left(\ln N+\gamma+\frac{1}{2N}\right)
\end{align*}
and
\begin{align*}
|\mathcal{M}(\mr',\mr)|&\approx\left|\sum_{p=1}^{N}\frac{i^p}{p}\frac{2}{\kb|\mr'-\mr|}\cos^2\left(\kb|\mr'-\mr|-\frac{p\pi}{2}-\frac{\pi}{4}+O\left(\frac{1}{\kb|\mr'-\mr|}\right)\right)\sin\left(\frac{2p}{3}\pi\right)\right|\\
&\leq\frac{\sqrt{3}}{\kb|\mr'-\mr|}\sum_{p=1}^{N}\frac{1}{p}\leq\frac{\sqrt{3}}{\kb|\mr'-\mr|}\left(\ln N+\gamma+\frac{1}{2N}\right),
\end{align*}
where $c_p$ satisfies $0\leq c_p \leq(8p^2)^{-1}$ and $\gamma=0.577215665\ldots$ denotes the Euler–Mascheroni constant. Since
\[\frac{1}{\kb|\mr'-\mr|}<\sqrt{\frac{1}{2\kb|\mr'-\mr|}}\quad\text{if}\quad\kb|\mr'-\mr|\gg1/4,\]
we can explain that the factor $\mathcal{M}(\mr',\mr)$ generates less artifacts than $\mathcal{E}(\mr',\mr,\ma_m)$ because $\mathcal{M}(\mr',\mr)$ has a smaller amplitude than $\mathcal{E}(\mr',\mr,\ma_m)$. We refer to 1-D plots of $\mathcal{D}_1(x)$ and $\mathcal{D}_2(x)$ in Figures \ref{FigureBessel1} and \ref{FigureBessel2}, respectively.
\end{remark}

\begin{figure}[h]
\begin{center}
\begin{tikzpicture}
\begin{axis}
[legend style={fill=none},
width=\textwidth,
height=0.5\textwidth,
xmin=-1,
xmax=1,
ymin=0,
ymax=1,
legend pos=north east,
legend cell align={left}]
\addplot[line width=1.2pt,solid,color=blue] %
	table[x=x,y=y,col sep=comma]{BesselFunctions3.csv};
\addlegendentry{$\mathfrak{F}_{\osm}(\mr',\ma_1)$};
\addplot[line width=1.2pt,solid,color=red] %
	table[x=x,y=z,col sep=comma]{BesselFunctions3.csv};
\addlegendentry{$\mathfrak{F}_{\mosm}(\mr')$};
\end{axis}
\end{tikzpicture}
\caption{\label{FigureBessel3}1-D Plots of $\mathfrak{F}_{\osm}(\mr',\ma_1)$ and $\mathfrak{F}_{\mosm}(\mr')$ for $\mr=(0,0)$ and $\mr'=(x,0)$, $-1\leq x\leq 1$, at $f=\SI{2}{\giga\hertz}$.}
\end{center}
\end{figure}

Based on the Theorem \ref{OSM_Multiple} and Remark \ref{RemarkM1}, we can also obtain the following result of unique determination.

\begin{theorem}[Unique determination of small objects]\label{Theorem_Uniqueness}
Assume that the condition of the Theorem \ref{OSM_Multiple} holds. Then small objects can be identified uniquely through the map of $\mathfrak{F}_{\mosm}(\mr')$.
\end{theorem}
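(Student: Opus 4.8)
The plan is to combine the closed-form representation from Theorem \ref{OSM_Multiple} with the behaviour of the Bessel functions at the origin and the negligibility of the disturbance term established in Remark \ref{RemarkM2}. First I would apply the mean-value theorem to the integral in \eqref{Structure_Multiple}, exactly as in Section \ref{sec:3}, using the smallness and the well-separatedness of the $D_s$ to reduce the indicator to a finite sum over the object centres $\mr_s$:
\[
\mathfrak{F}_{\mosm}(\mr')\approx\abs{\frac{2}{3\abs{\ma}\abs{\mb}}\sum_{s=1}^{S}\area(D_s)\left(\frac{\eps_s-\epsb}{\epsb\mub}\right)\left(J_0(\kb\abs{\mr'-\mr_s})^2+\frac{3}{\pi}\mathcal{M}(\mr',\mr_s)\right)}.
\]

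Second I would evaluate this representation at a search point coinciding with an object centre, $\mr'=\mr_t$. Since $J_0(0)=1$ and $J_p(0)=0$ for $p\geq1$, we have $\mathcal{M}(\mr_t,\mr_t)=0$, so the $t$th summand contributes precisely $\frac{2}{3\abs{\ma}\abs{\mb}}\area(D_t)\abs{(\eps_t-\epsb)/(\epsb\mub)}$, which is the peak value announced in Remark \ref{RemarkM1}. The remaining summands ($s\neq t$) carry the factors $J_0(\kb\abs{\mr_t-\mr_s})^2$ and $\mathcal{M}(\mr_t,\mr_s)$ evaluated at large arguments $\kb\abs{\mr_t-\mr_s}\gg1$, which are small by the decay of $J_0^2$ and by the amplitude bound on $\mathcal{M}$ obtained in Remark \ref{RemarkM2}. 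Hence $\mathfrak{F}_{\mosm}$ attains a sharp local maximum at each $\mr_t$; and since $J_0(x)^2$ is globally maximised at $x=0$ while $\mathcal{M}$ remains uniformly negligible, no other point can reach a comparable value, so the peak locations coincide exactly with $\set{\mr_1,\ldots,\mr_S}$.

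Third, for the uniqueness itself I would emphasise the structural difference from the single-source case. In Remark \ref{remark1} the peak magnitude carried the source-dependent factor $\abs{G(\mr,\ma_m)}$, so the recovered image—and even the visibility of an object—depended on the emitter location and could vanish at high frequency. By contrast, the peak magnitude $\frac{2}{3\abs{\ma}\abs{\mb}}\area(D_t)\abs{(\eps_t-\epsb)/(\epsb\mub)}$ obtained above contains no such factor: the frequency enters only through the Bessel arguments, which vanish at the peak. Consequently the configuration of peaks of $\mathfrak{F}_{\mosm}$, and with it the set of object locations, is independent of the emitter and receiver positions and of the operating frequency, so the objects are determined unambiguously.

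The main obstacle will be controlling, under the modulus, the interplay between the cross-object terms and the disturbance factor $\mathcal{M}$, so as to rule out spurious peaks: one must verify that neither constructive accumulation of the $s\neq t$ contributions nor the oscillating tail of $\mathcal{M}$ can produce a false maximum exceeding the genuine object peaks. The well-separatedness hypothesis on the $D_s$ together with the quantitative bound on $\mathcal{M}$ from Remark \ref{RemarkM2} are the tools for this, but turning the qualitative word \emph{negligible} into a rigorous inequality is the delicate step.
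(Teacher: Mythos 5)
Your proposal follows essentially the same route as the paper, which in fact offers no formal proof at all: it simply asserts the theorem ``based on Theorem \ref{OSM_Multiple} and Remark \ref{RemarkM1}'' and later appeals to the numerical results, so your elaboration (mean-value reduction, $J_0(0)=1$ with $J_p(0)=0$ at the peaks, source-independence of the peak magnitude) is already more detailed than the original. The delicate step you honestly flag --- turning the ``negligible'' bound on $\mathcal{M}$ and the cross-object terms into a rigorous inequality excluding spurious peaks --- is likewise left unaddressed in the paper.
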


From now on, we consider the simulation results. The simulation configuration is same as the one in Section \ref{sec:4} except the range of emitters is from $\SI{0}{\degree}$ to $\SI{350}{\degree}$ with step size of $\SI{10}{\degree}$. Figure \ref{Multiple12} shows maps of $\mathfrak{F}_{\osmm}(\mr')$ and $\mathfrak{F}_{\mosm}(\mr')$, and Jaccard index (see \cite{Jaccard} for instance) at $f=1,\SI{2}{\giga\hertz}$. Same as the imaging result with single source, it is impossible to recognize two objects through the maps of $\mathfrak{F}_{\osmm}(\mr')$ and $\mathfrak{F}_{\mosm}(\mr')$ at $f=\SI{1}{\giga\hertz}$. Fortunately, in contrast to the single source case, the location and outline shape of two objects were successfully retrieved. Although the imaging quality of $\mathfrak{F}_{\osmm}(\mr')$ and $\mathfrak{F}_{\mosm}(\mr')$ looks similar, based on the evaluated Jaccard index, we can say that the imaging performance of $\mathfrak{F}_{\mosm}(\mr')$ is slightly better than the one of $\mathfrak{F}_{\osm}(\mr')$. Similar phenomenon can be examined through the imaging results in Figure \ref{Multiple34} at $f=3,\SI{4}{\giga\hertz}$.

It is interesting to examine that opposite to the imaging results with single source, it is possible to recognize the existence of two objects at $f=6,\SI{8}{\giga\hertz}$, refer to Figure \ref{Multiple68}. However, unfortunately, the imaging quality seems poorer than the ones in Figure \ref{Multiple34}. Hence, similar to the Remark \ref{remark1}, application of extremely high frequency is not appropriate to retrieve unknown objects.

\begin{figure}[h]
\begin{center}
\subfigure[$\mathfrak{F}_{\osmm}(\mr')$ at $f=\SI{1}{\giga\hertz}$]{\includegraphics[width=.33\textwidth]{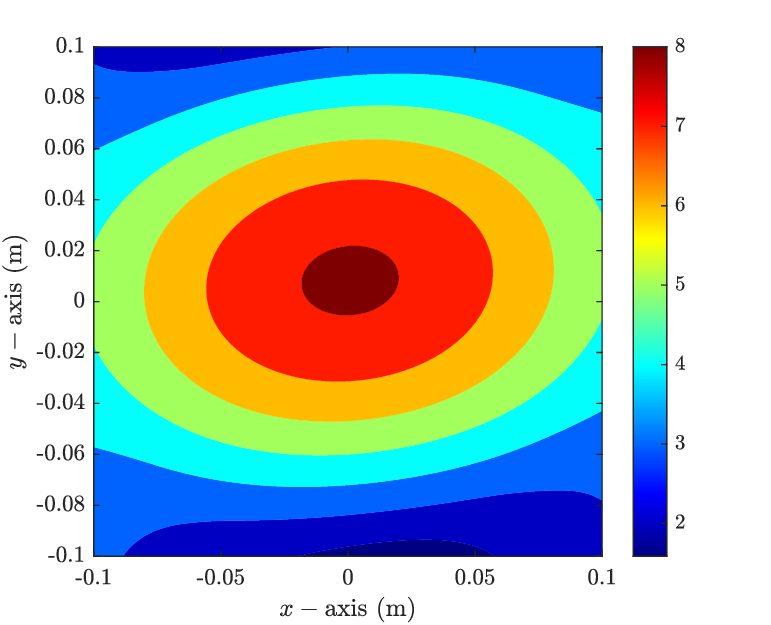}}\hfill
\subfigure[$\mathfrak{F}_{\mosm}(\mr')$ at $f=\SI{1}{\giga\hertz}$]{\includegraphics[width=.33\textwidth]{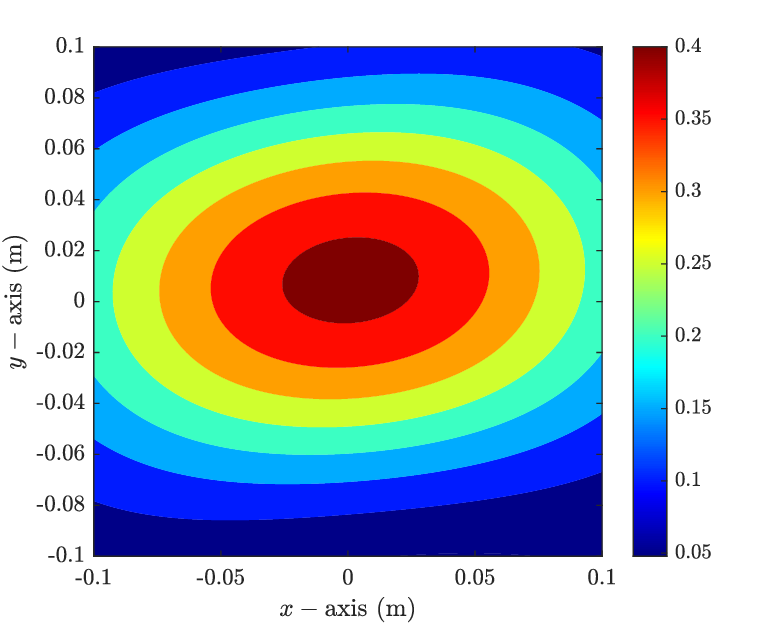}}\hfill
\subfigure[Jaccard index at $f=\SI{1}{\giga\hertz}$]{\includegraphics[width=.33\textwidth]{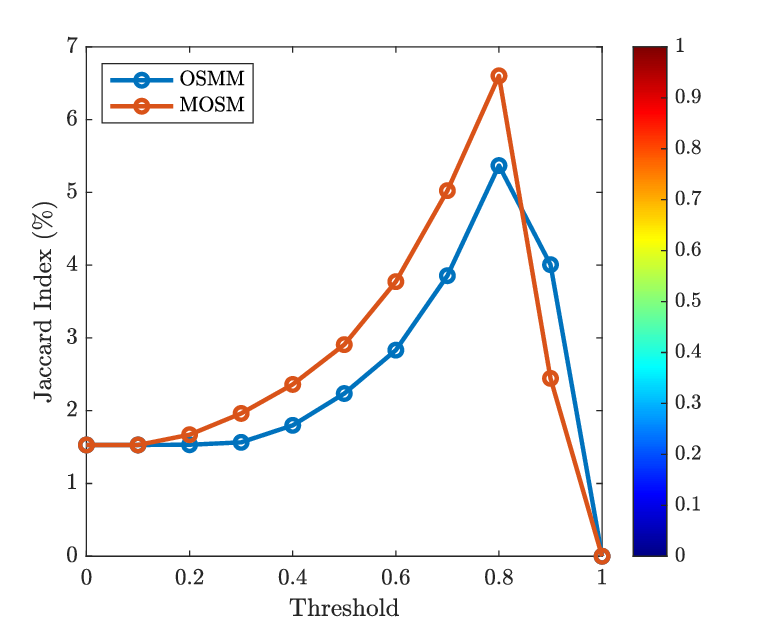}}\\
\subfigure[$\mathfrak{F}_{\osmm}(\mr')$ at $f=\SI{2}{\giga\hertz}$]{\includegraphics[width=.33\textwidth]{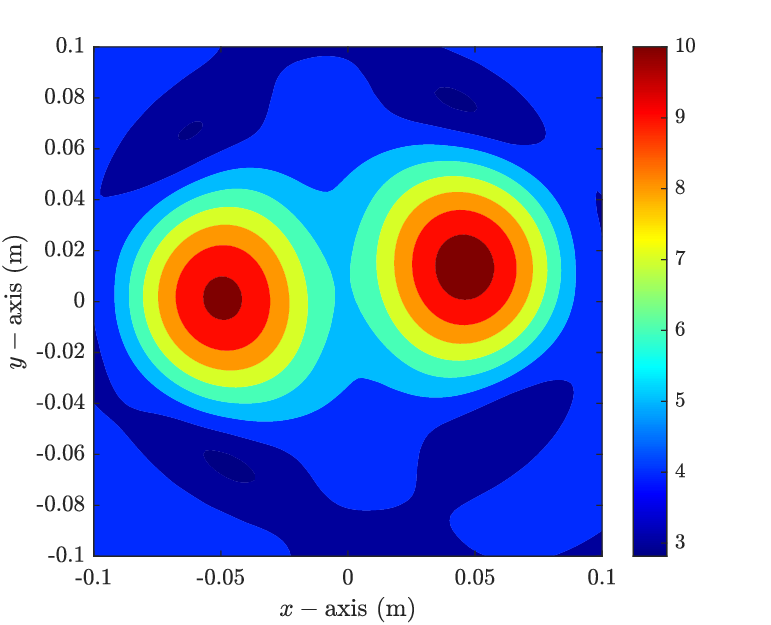}}\hfill
\subfigure[$\mathfrak{F}_{\mosm}(\mr')$ at $f=\SI{2}{\giga\hertz}$]{\includegraphics[width=.33\textwidth]{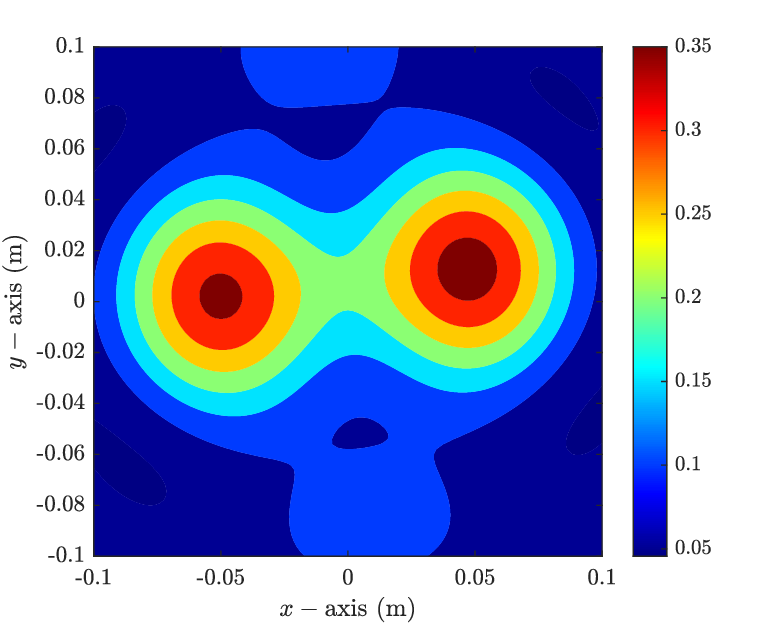}}\hfill
\subfigure[Jaccard index at $f=\SI{2}{\giga\hertz}$]{\includegraphics[width=.33\textwidth]{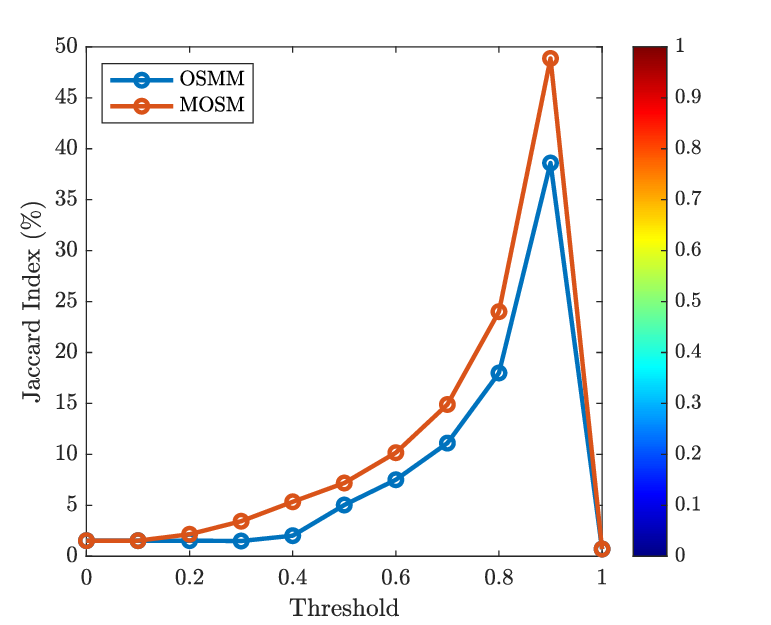}}
\caption{\label{Multiple12}Maps of $\mathfrak{F}_{\osmm}(\mr')$ and $\mathfrak{F}_{\mosm}(\mr')$, and Jaccard index versus threshold.}
\end{center}
\end{figure}

\begin{figure}[h]
\begin{center}
\subfigure[$\mathfrak{F}_{\osmm}(\mr')$ at $f=\SI{3}{\giga\hertz}$]{\includegraphics[width=.33\textwidth]{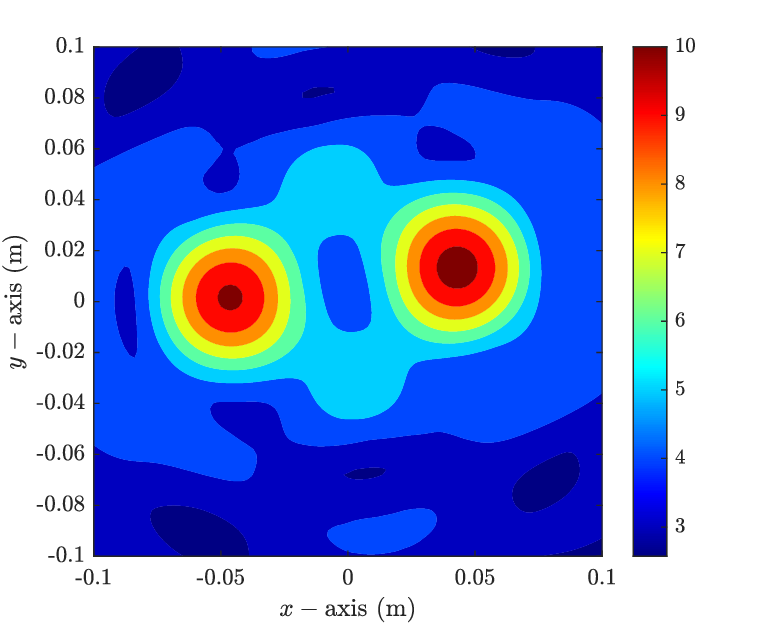}}\hfill
\subfigure[$\mathfrak{F}_{\mosm}(\mr')$ at $f=\SI{3}{\giga\hertz}$]{\includegraphics[width=.33\textwidth]{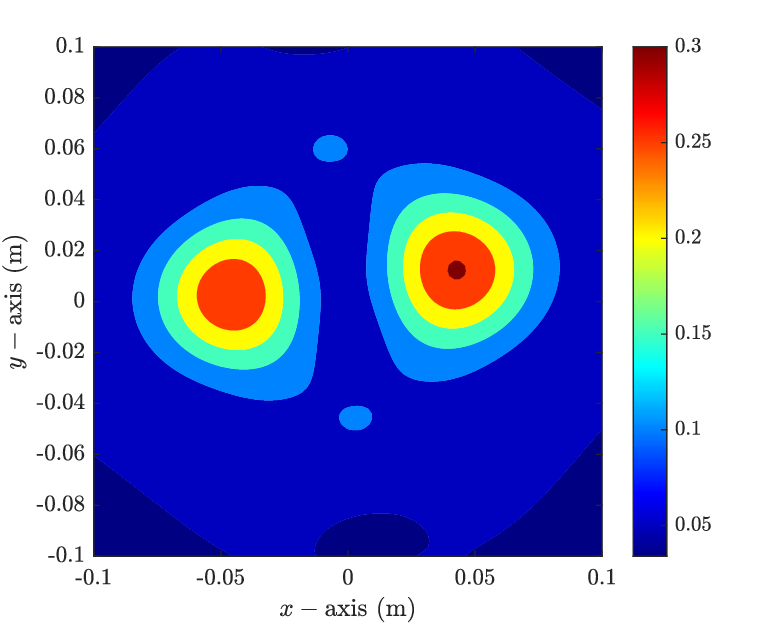}}\hfill
\subfigure[Jaccard index at $f=\SI{3}{\giga\hertz}$]{\includegraphics[width=.33\textwidth]{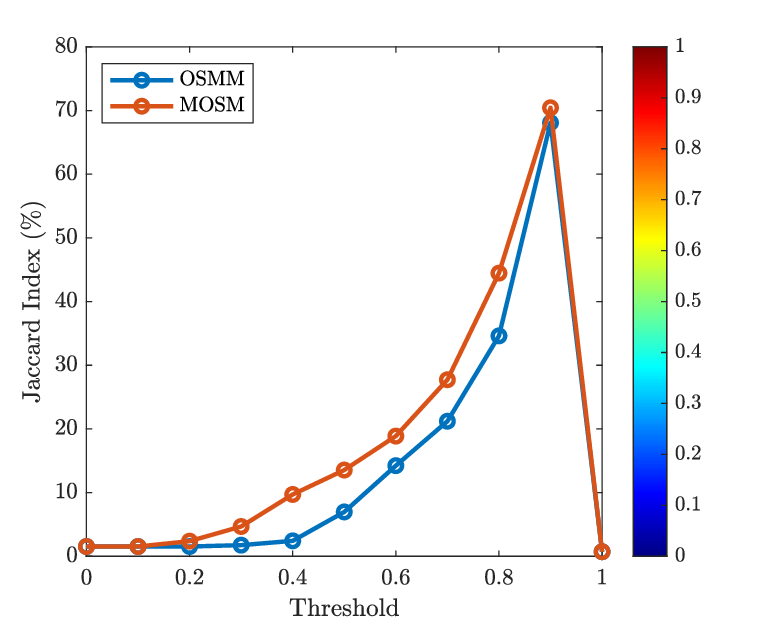}}\\
\subfigure[$\mathfrak{F}_{\osmm}(\mr')$ at $f=\SI{4}{\giga\hertz}$]{\includegraphics[width=.33\textwidth]{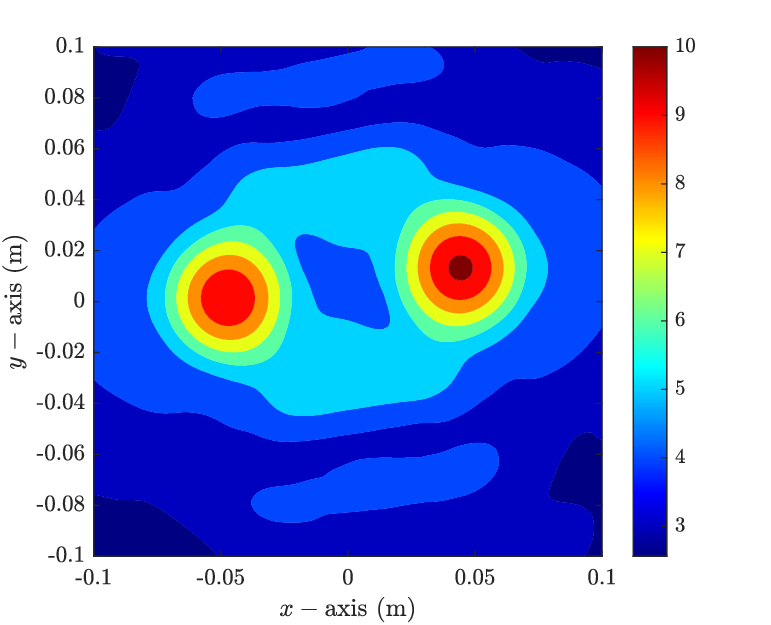}}\hfill
\subfigure[$\mathfrak{F}_{\mosm}(\mr')$ at $f=\SI{4}{\giga\hertz}$]{\includegraphics[width=.33\textwidth]{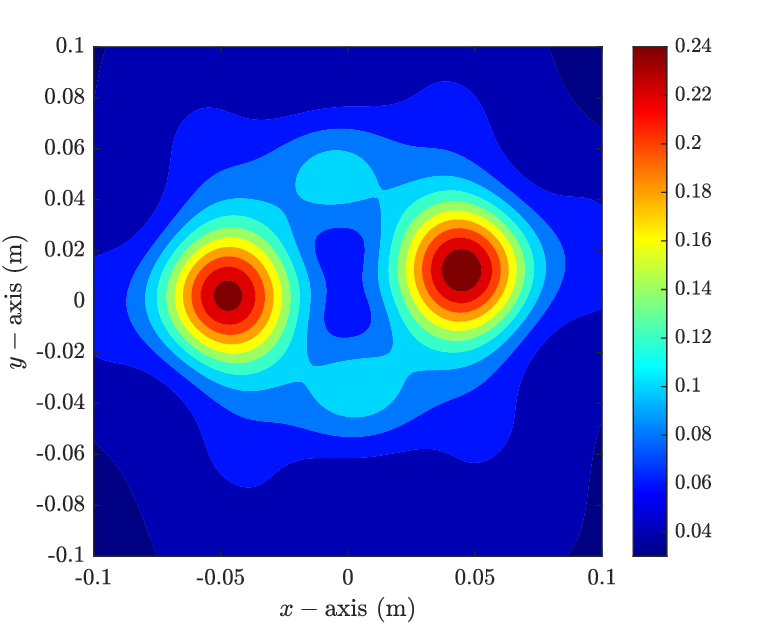}}\hfill
\subfigure[Jaccard index at $f=\SI{4}{\giga\hertz}$]{\includegraphics[width=.33\textwidth]{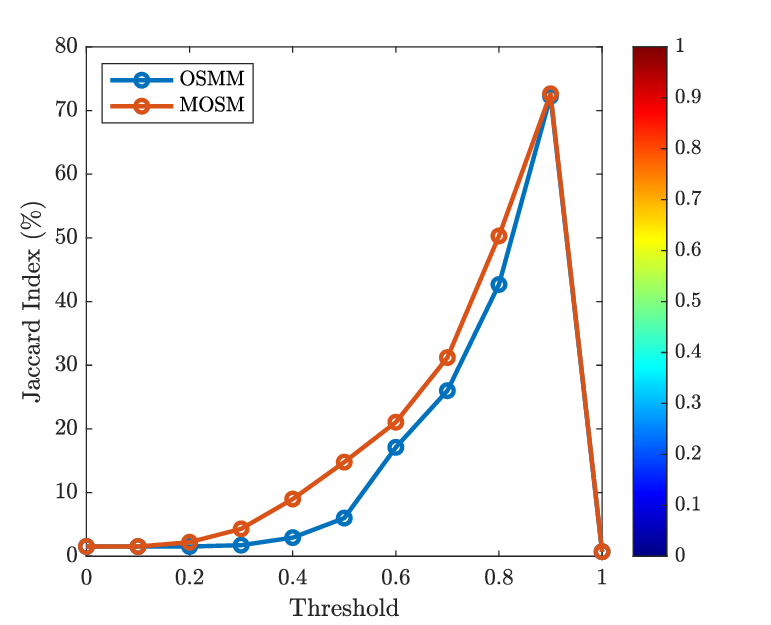}}
\caption{\label{Multiple34}Maps of $\mathfrak{F}_{\osmm}(\mr')$ and $\mathfrak{F}_{\mosm}(\mr')$, and Jaccard index versus threshold.}
\end{center}
\end{figure}

\begin{figure}[h]
\begin{center}
\subfigure[$\mathfrak{F}_{\osmm}(\mr')$ at $f=\SI{6}{\giga\hertz}$]{\includegraphics[width=.33\textwidth]{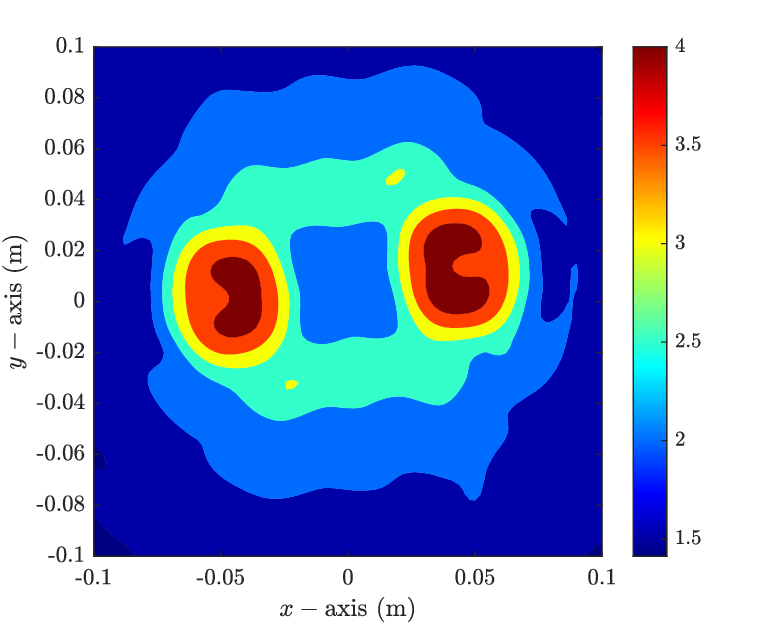}}\hfill
\subfigure[$\mathfrak{F}_{\mosm}(\mr')$ at $f=\SI{6}{\giga\hertz}$]{\includegraphics[width=.33\textwidth]{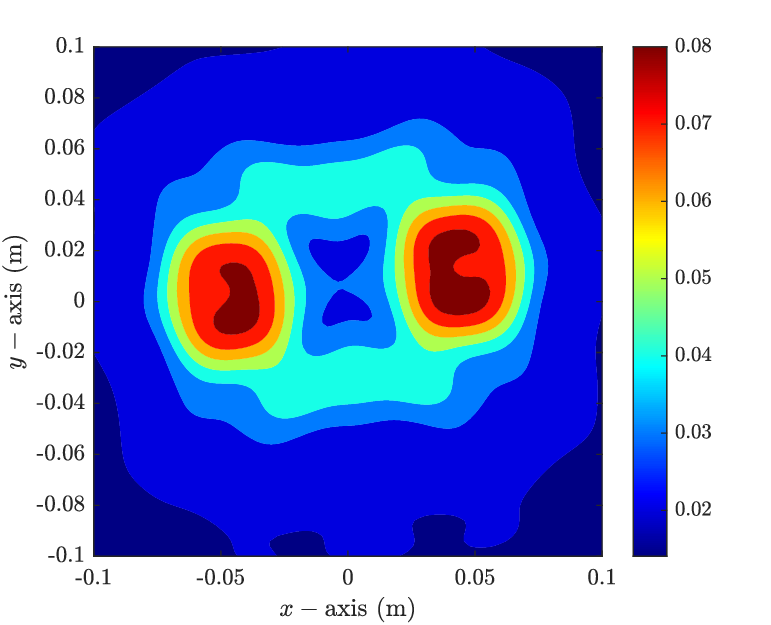}}\hfill
\subfigure[Jaccard index at $f=\SI{6}{\giga\hertz}$]{\includegraphics[width=.33\textwidth]{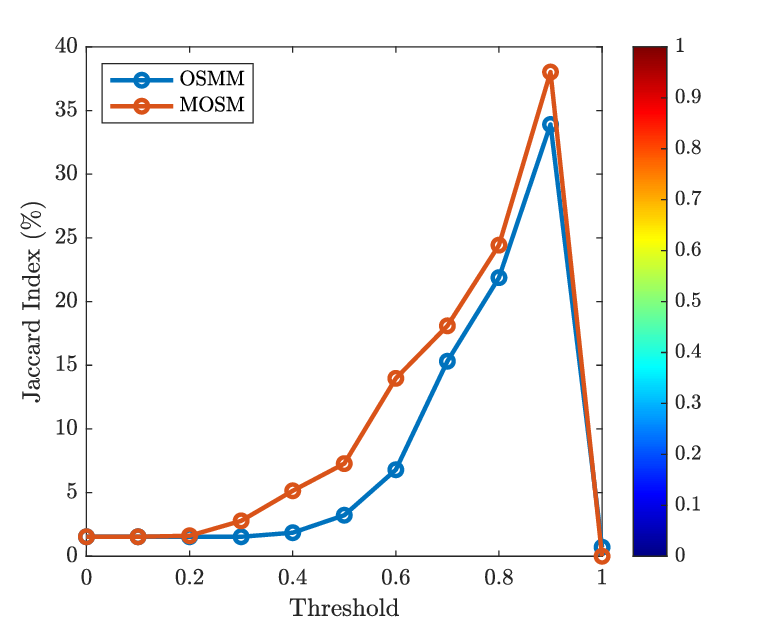}}\\
\subfigure[$\mathfrak{F}_{\osmm}(\mr')$ at $f=\SI{8}{\giga\hertz}$]{\includegraphics[width=.33\textwidth]{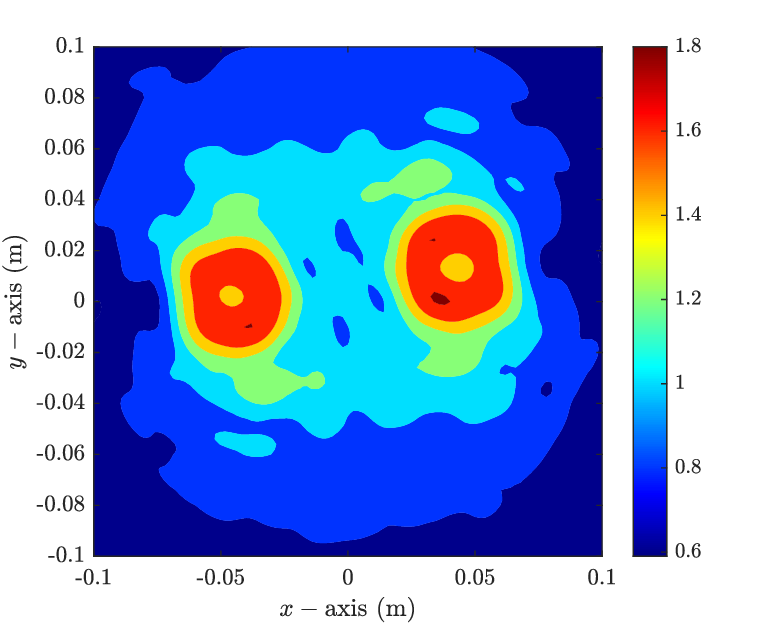}}\hfill
\subfigure[$\mathfrak{F}_{\mosm}(\mr')$ at $f=\SI{8}{\giga\hertz}$]{\includegraphics[width=.33\textwidth]{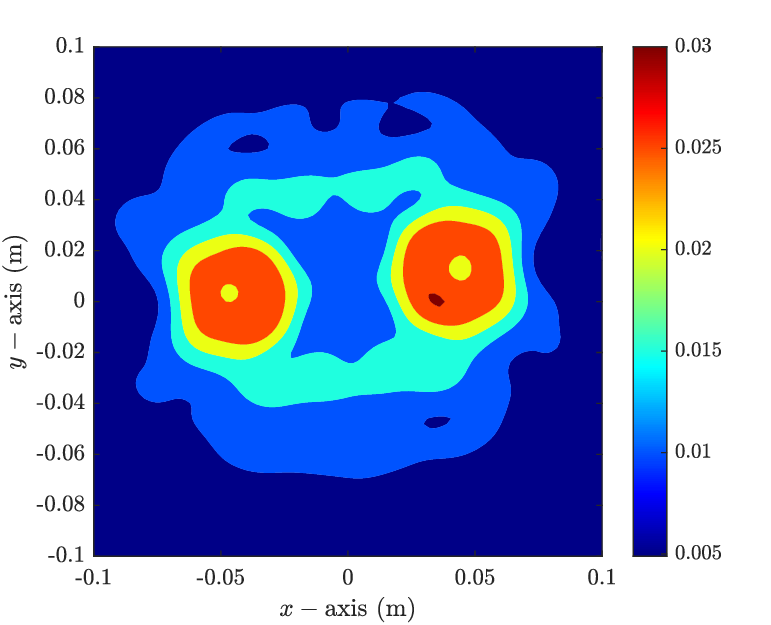}}\hfill
\subfigure[Jaccard index at $f=\SI{8}{\giga\hertz}$]{\includegraphics[width=.33\textwidth]{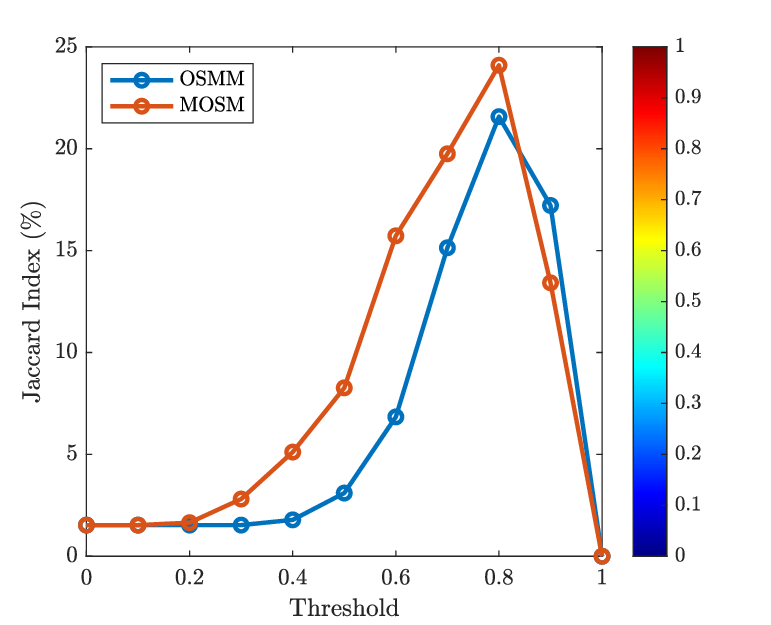}}
\caption{\label{Multiple68}Maps of $\mathfrak{F}_{\osmm}(\mr')$ and $\mathfrak{F}_{\mosm}(\mr')$, and Jaccard index versus threshold.}
\end{center}
\end{figure}

Finally, we mention that Theorem \ref{Theorem_Uniqueness} holds based on the imaging results in Figures  \ref{Multiple12}, \ref{Multiple34}, and \ref{Multiple68}.

\section{Conclusion}\label{sec:6}
In this paper, we have considered the application of the OSM with single source to identify the existence and outline shape of small dielectric objects from real-world experimental data. Thanks to the asymptotic expansion formula for the scattered field data in the presence of small object, we derived an accurate relationship between the indicator function and infinite series of the Bessel function of the first kind. On the basis of the derived relationship, we explored various properties of the indicator function including the applicability and limitation. We have also verified the theoretical result with experimental data at various frequencies.

In order to improve the imaging performance of the OSM, we have collected scattered field data with multiple sources and introduced a new indicator function. Throughout a careful analysis, we have shown that the imaging performance of the designed indicator function is independent to the location of the emitter and better than the traditional ones with single and multiple sources. Moreover, it can be possible to identify small object uniquely if applied frequency is not extremely low or high.

In this paper, we considered the application of the OSM with 2D Fresnel experimental database. An extension to the 3D Fresnel experimental database \cite{GSE} will be the forthcoming work.

\section*{Acknowledgments}
This research was supported by the National Research Foundation of Korea (NRF) grant funded by the Korea government (MSIT) (NRF-2020R1A2C1A01005221) and the research program of the Kookmin University.

\bibliographystyle{elsarticle-num-names}
\bibliography{../../../References}
\end{document}